\journal{arXiv}
\newcommand{\ol}{\overline}
\newcommand{\ul}{\underline}
\newcommand{\mc}{\mathcal}
\newcommand{\mcN}{\mc{N}}
\newcommand{\nn}{\textnormal{\textbf{n}}}
\newcommand{\dv}{{--}}
\newtheorem{theorem}{Theorem}
\newtheorem{lemma}[theorem]{Lemma}
\newtheorem{corollary}[theorem]{Corollary}
\newtheorem{assumption}[theorem]{Assumption}
\newdefinition{remark}{Remark}
\newproof{proof}{Proof}
\begin{document}

\begin{frontmatter}

\title{Fast multigrid solvers for conforming and non-conforming multi-patch Isogeometric
Analysis}

\author[ricam]{Stefan Takacs}
\ead{stefan.takacs@ricam.oeaw.ac.at}

\address[ricam]{Johann Radon Institute for Computational and Applied Mathematics (RICAM),\\
Austrian Academy of Sciences}

\begin{abstract}
	Isogeometric Analysis is a high-order discretization method for 
	boundary value problems that uses a number of degrees of freedom which is as small as for
	a low-order method. Standard isogeometric discretizations require a global parameterization
	of the computational domain. In non-trivial cases, the domain is decomposed
	into patches having separate parameterizations and separate discretization spaces.
	If the discretization spaces agree on the interfaces between
	the patches, the coupling can be done in a conforming way. Otherwise,
	non-conforming discretizations (utilizing discontinuous Galerkin approaches)
	are required. The author and his coworkers have previously introduced
	multigrid solvers for Isogeometric Analysis for the conforming case. In the present
	paper, these results are extended to the non-conforming case. Moreover,
	it is shown that the multigrid solves get even more powerful if the proposed smoother
	is combined with a (standard) Gauss-Seidel smoother.
\end{abstract}

\begin{keyword}
	Isogeometric Analysis \sep Multi-patch domains \sep Symmetric interior penalty discontinuous
	Galerkin
\end{keyword}

\end{frontmatter}

\section{Introduction}

Isogeometric Analysis (IgA), see~\cite{Hughes:2005}, is a spline based approach
for approximating the solution of a boundary value problem (BVP).
One of the big
strengths of IgA is that it has the approximation power of a high-order method
while the number of degrees of freedom behaves basically like
for a low-order method. To obtain this behavior, we have to be able to increase the
spline degree while we simultaneously increase the smoothness. In IgA, this is
typically called $k$-refinement and leads to spline based discretizations.

In IgA, the computational domain (usually called physical domain) is parameterized
using spline or NURBS functions. Since
it might be too restrictive to parameterize the whole computational
domain using just one global (smooth) geometry function, one typically represents the
physical domain as the union of subdomains, in IgA called
patches. Then, each of the patches is parameterized by its own geometry
function (multi-patch IgA).

On each patch, a space of trial and test functions is introduced.
The simplest approach to set up such a function space is to
use tensor-product B-splines on the unit square or unit cube (parameter domain) and to
use the geometry function to map them onto the physical domain or, in the multi-patch case,
onto one of the patches. If we set up
the function spaces such that the basis functions on the interfaces between
the patches agree, we can use conforming discretizations. Approximation errors
(cf.~\cite{Hughes:2005,Bazilevs:BeiraoDaVeiga:Cottrell:Hughes:Sangalli:2006,
BeiraoDaVeiga:Buffa:Rivas:Sangalli:2011,BeiraoDaVeiga:Buffa:Sangalli:Vazquez:2014,
Takacs:Takacs:2015,Floater:Sande:2017,Sande:Manni:Speleers:2018,deLaRiva:Rodrigo:Gaspar:2018,Tielen:Moller:Goddeke:Vuik:2019} and many others)
and multigrid solvers 
(cf.~\cite{Gahalaut:Kraus:Tomar:2013,Speleers:2015,Hofreither:Takacs:Zulehner:2017,Hofreither:Takacs:2017} and others) for such discretizations have been previously discussed.
Since we are interested in $k$-refinement, we need
results that are explicit in the spline degree. For the single-patch case,
such error estimates have originally been given in~\cite{Takacs:Takacs:2015} and later
improved in~\cite{Sande:Manni:Speleers:2018}. In~\cite{Hofreither:Takacs:2017}, a robust single-patch
multigrid solver has been proposed and analyzed based on the error
estimates from~\cite{Takacs:Takacs:2015}. In~\cite{Takacs:2018}, both the approximation
error estimates and the multigrid solver have been extended to the conforming multi-patch case.
These results are the foundation of the present paper.

If conforming discretizations are not feasible, discontinuous Galerkin
(dG) approaches are possible. One standard dG approach is the Symmetric Interior Penalty
discontinuous Galerkin (SIPG) method, see~\cite{Arnold:1982,Arnold:Brezzi:Cockburn:Marini:2002}.
Already in~\cite{LMMT:2015,LT:2015}, it has been proposed to utilize these approaches
to couple patches in IgA. Recently, also the dependence of
the approximation error on the spline degree has been analyzed, see~\cite{Takacs:2019}.
It was not possible to show that the approximation error is robust in the
spline degree but it could be proven that it only grows logarithmically. 

(Robust) multigrid solvers for such non-conforming discretizations are not known
so far. In the present paper, it is shown
how the multigrid solver from~\cite{Takacs:2018} can be extended to SIPG
discretizations; we observe -- as in~\cite{Takacs:2018} -- that the numerical
experiments show both robustness in the grid size and the spline degree. For
completeness, we also show how to extend the convergence analysis from~\cite{Takacs:2018}
to SIPG discretizations.
It is worth noting that there are alternative solvers for multi-patch IgA, like FETI-type
approaches, cf.~\cite{KleissEtAl:2012,Hofer:2017} and others, overlapping Schwarz type
methods, cf.~\cite{BeiraoDaVeiga:Cho:Pavarino:Scacchi:2012}, or BDDC methods,
cf.~\cite{BeiraoDaVeiga:Cho:Pavarino:Scacchi:2013}; most of them, however, have not been
worked out for the non-conforming case.

Note that the idea behind the proposed subspace corrected mass smoother is that the boundary
value problem on the physical domain (on one patch) can be well approximated by a boundary
value problem on the parameter domain. Thus, the tensor-product structure on the
parameter domain can be used. This is true if the geometry function is not too distorted.
Otherwise, the convergence behavior suffers significantly. The same behavior can be
observed by other fast solvers that are based on the same idea, cf. the fast
diagonalization method~\cite{Sangalli:2016}. Here, the authors have improved their
method by incorporating the geometry information into the preconditioner,
cf.~\cite{Montardini:Sangalli:Tani:2018}. For the multigrid setting,
it has turned out that one can overcome these problems
quite well if the subspace corrected mass smother is combined with a Gauss-Seidel smoother
(hybrid smoother) since both approaches have strengths that seem to be somewhat
orthogonal to each other (robustness in spline degree vs. robustness in the geometry),
cf. also~\cite{Sogn:Takacs:2018}.

In the present paper, we illustrate our findings with numerical
experiments. All presented numerical
experiments are available in the G+Smo library~\cite{gismoweb}.

This paper is organized as follows. We give the model problem
and a conforming discretization in Section~\ref{sec:mp}.
Then, in Section~\ref{sec:sipg}, we discuss why a non-conforming discretization
might be of interest. Moreover, we propose a discontinuous Galerkin approach
that fits our needs. We proceed to multigrid solvers: In Section~\ref{sec:mg:gs},
we discuss Gauss-Seidel smoothers and their performance. Motivated by that
section, we introduce a subspace corrected mass smoother in Section~\ref{sec:mg:scms}
and finally a hybrid smoother in Section~\ref{sec:mg:hyb}. In Section~\ref{sec:fin},
we conclude and give some outlook. The Appending finally contains the
proofs of the theorems stated in the paper.

\section{Model problem and standard Galerkin discretization}\label{sec:mp}

Let $\Omega\subset\mathbb{R}^d$ with $d\in\{2,3\}$ be an open and simply
connected Lipschitz domain. Most of the numerical experiments are done
for the two-dimensional domains shown
in Figure~\ref{fig:1}.

\begin{figure}[h]
\begin{center}
	\includegraphics[height=.25\textwidth]{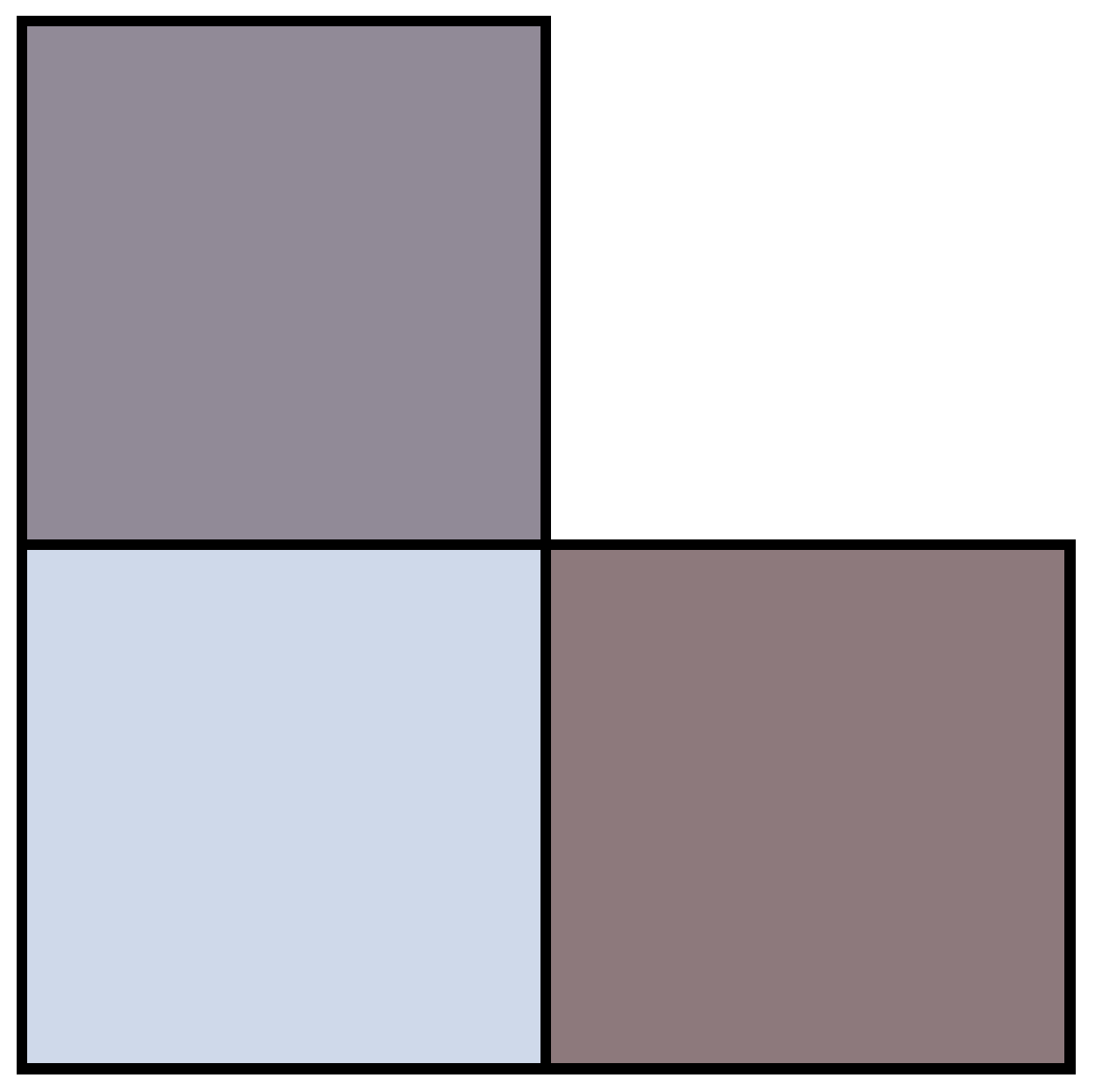} \qquad\qquad
	\includegraphics[height=.25\textwidth]{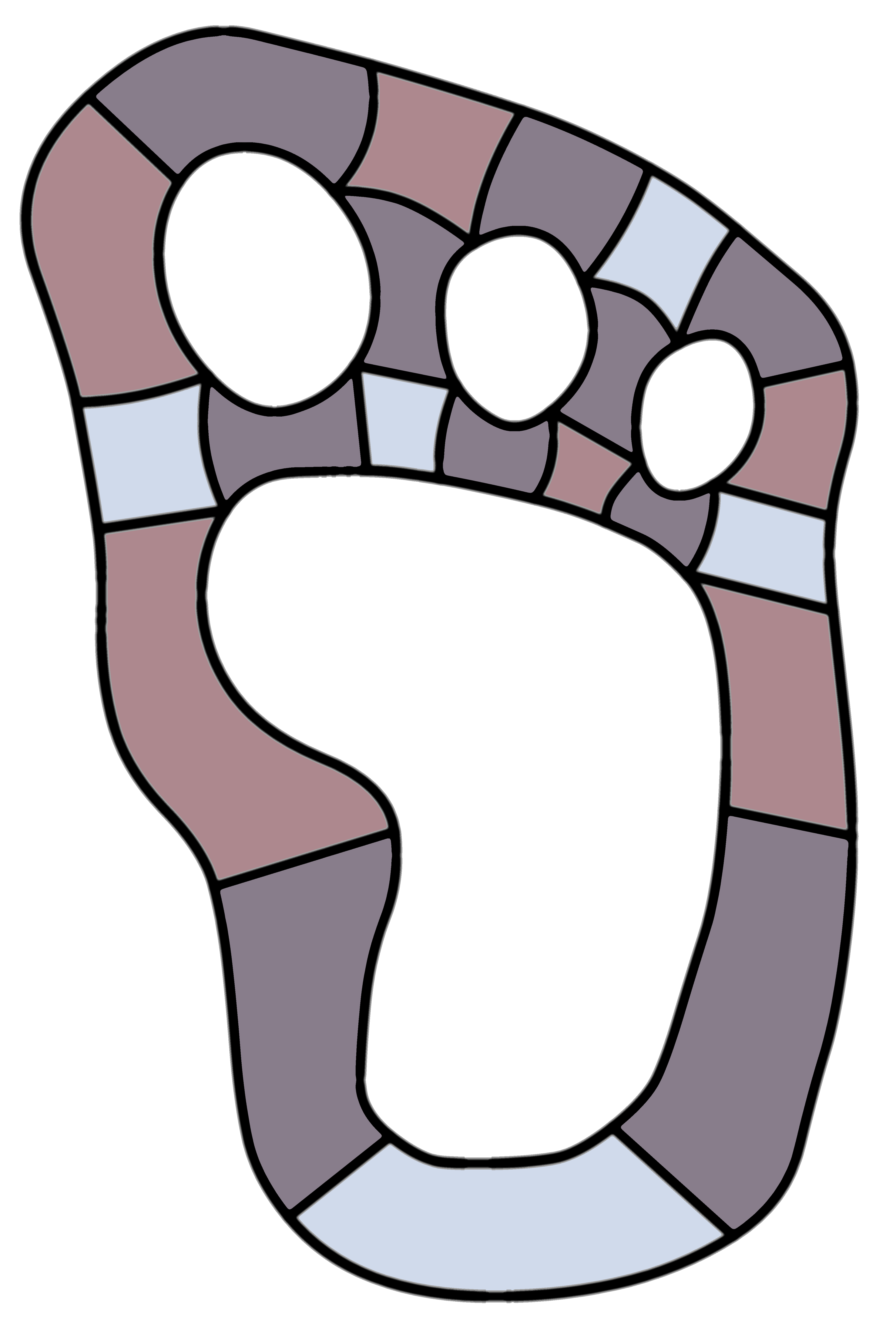}
\end{center}
\caption{The computational domains: L-shaped domain and Yeti footprint}
\label{fig:1}
\end{figure}

The first domain is an L-shaped domain consisting of three quadrilaterals.
Here, the geometry function is just the identity or a translation. 
On the coarsest grid level $\ell=0$,
each patch consists only of one element, i.e., the local basis functions are polynomials only.
The second domain is the Yeti footprint which consists of the 21 patches depicted in
Figure~\ref{fig:1}. Here, the grid on the coarsest grid level is as follows. The five patches
at the bottom consist of two elements each, which are constructed by subdividing the patches
on their longer sides. The remaining patches consist only of one element each.
The grid levels $\ell=1,2,\ldots,$ are obtained by uniform refinement.

Consider the following \emph{Poisson model problem with Dirichlet boundary conditions}.
Find $\mathrm u\in H^1(\Omega)$ such that
\begin{equation} \label{eq:model0}
	-\Delta \mathrm u = \mathrm f \quad \mbox{in}\quad \Omega,
	\qquad
	\mathrm u= \mathrm g \quad \mbox{on}\quad \partial \Omega,
\end{equation}
where $\mathrm f\in L_2(\Omega)$ and $\mathrm g\in H^2(\Omega)$ are given functions.
Here and in what follows,  $L_2(\Omega)$, $H^r(\Omega)$ and $H^r_0(\Omega)$
are the standard Lebesgue and Sobolev spaces.

The experiments are performed for the choice
$\mathrm g(x,y) := \sin(\pi x) \,  \sin(\pi y)$ and $\mathrm f:=-\Delta \mathrm g$;
note that $\mathrm g$ is the exact solution of the problem. 

After homogenization ($u:=\mathrm u-\mathrm g$, $f:=\mathrm f+\Delta\mathrm g$),
the problem reads in variational form
as follows. Find $u\in V:=H^1_0(\Omega)$ such that
\begin{equation} \label{eq:model}
		( \nabla u,\nabla v)_{L_2(\Omega)} = (f,v)_{L_2(\Omega)}
			\qquad \mbox{for all $v \in H^1_0(\Omega)$}.
\end{equation}

The computational domain $\Omega$ is a standard multi-patch domain. Thus, 
we assume that $\Omega$ is composed of $K$ non-overlapping patches $\Omega_k$:
\begin{equation}\label{eq:omega:1}
	\ol\Omega = \bigcup_{k=1}^K \ol{\Omega_k}
	\quad \mbox{ with } \quad
	\Omega_k\cap \Omega_l =\emptyset
	\quad \mbox{ for } k\not=l,
\end{equation}
where each patch is 
represented by a sufficiently smooth bijective geometry function
\begin{equation}\label{eq:omega:2}
		G_k :\widehat{\Omega}:=(0,1)^d \rightarrow \Omega_k := G_k (\widehat{\Omega})\subset \mathbb{R}^d
\end{equation}
which can be continuously extended to $\ol{\widehat{\Omega}}$, the closure of~$\widehat{\Omega}$.
Moreover, we assume that the mesh introduced by the patches satisfies the following
condition.
\begin{assumption}\label{ass:1}
	For any $k\not=l$, the intersection $\ol{\Omega_k} \cap \ol{\Omega_l}$ is either
	(a) empty, (b) one common vertex, (c) the closure of one common edge,
	or -- for $d=3$ -- (d) the closure of one common face.
\end{assumption}

For each of the patches, we assume to have a hierarchy of grids with levels $\ell=0,1,\ldots,L$
obtained by uniform refinement, which we denote by
\begin{equation}\label{eq:local}
	V_{k,\ell} := \{
		v \in L_2(\Omega_k)
		\; : \;
		v \circ G_k \in \bigotimes_{\delta=1}^d S_{p,h_\ell} 
	\} = \mbox{span } \{\varphi_{k,\ell}^{(i)}\}_{i=1}^{N_{k,\ell}},
\end{equation}
where $\bigotimes_{\delta=1}^d S_{p,h_\ell} $ is the space of tensor-product splines of degree
$p$, smoothness $H^p(\widehat{\Omega})$ (or, equivalently, $C^{p-1}(\widehat{\Omega})$)
and grid size $h_\ell=2^{\ell} h_0$ on the parameter domain $\widehat{\Omega}$.
Note that the grid can be non-uniform and both the spline degree and the grid size
can depend on the spatial direction and of the patch number; for simplicity, we do not write
down this dependence explicitly. Note that the grid needs to be quasi-uniform, i.e.,
there needs to be a constant $c>0$ such that all knot spans on grid level $\ell$ are bounded from below
by $c\,h_\ell$. The functions $\varphi_{k,\ell}^{(i)}$ are assumed to form
a (standard) B-spline or NURBS basis of $V_{k,\ell}$.

To be able to set up a conforming discretization, we need to assume
that the function spaces are fully matching on the interfaces, cf.~\cite[Assumption~2.4]{Takacs:2018}.
For tensor-product B-spline bases,
the following assumption characterizes fully matching discretizations.
\begin{assumption}\label{ass:fully}
	On each interface between two patches, the geometry functions,
	the knot vector in tangential direction, and the spline degree in tangential
	direction agree.
\end{assumption}

Assuming a fully matching discretization, we define the conforming discretization space by
\begin{equation}\label{eq:igaspace0}
	V_\ell^{c} :=
	\{
		v \in V
		\; : \;
		v|_{\Omega_k} \in V_{k,\ell} \;\mbox{ for }\; k=1,\ldots,K
	\}.
\end{equation}
A basis for this space is visualized in Figure~\ref{fig:dof} (left), where all
basis functions are represented by their Greville point. The support of the basis functions with Greville
point in the interior of a patch is completely contained in that patch. The basis
functions with Greville points on the interfaces are combinations of the matching
patch-local basis functions. Their support extends to the vertices if and only if
the Greville point is located on the vertex.

\begin{figure}[h]
\begin{center}
	\includegraphics[height=.22\textwidth]{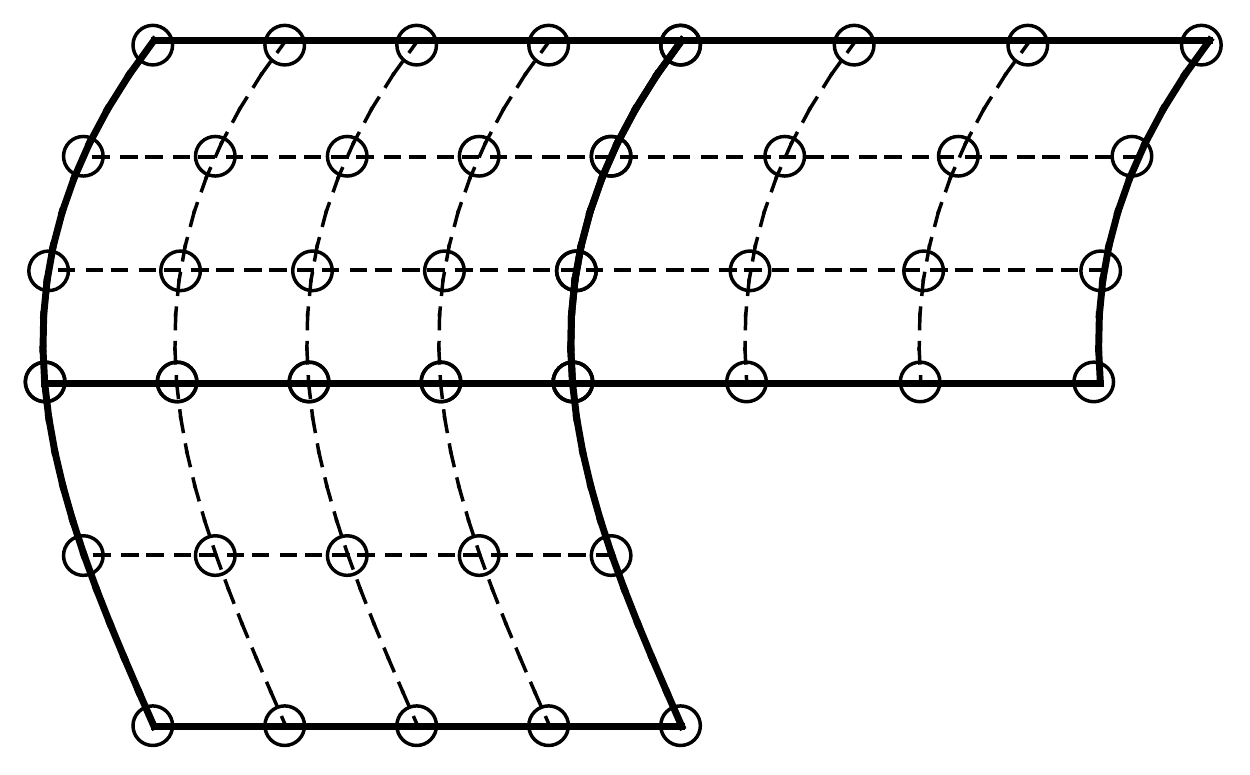}
\end{center}
\caption{Degrees of freedom (represented by Greville points) in conforming case}
\label{fig:dof}
\end{figure}

The conforming discretization of the model problem is obtained using
the standard Galerkin principle:
Find $u_\ell\in V_\ell^{c}$ such that
\begin{equation} \label{eq:model:discr}
		(u_\ell,v_\ell)_{H^1(\Omega)} = (f,v_\ell)_{L_2(\Omega)}
			\qquad \mbox{for all $v_\ell \in V_\ell^{c}$}.
\end{equation}
Using the abovementioned basis for the space $V_\ell^c$, we obtain a standard
matrix-vector problem:
Find $\ul{u}_\ell \in \mathbb{R}^{N_\ell}$ such that
\begin{equation} \label{eq:linear:system:0}
			A_\ell \, \ul{u}_\ell = \ul{f}_\ell,
\end{equation}
where $A_\ell$ is the stiffness matrix, the vector $\ul{u}_\ell$ is the representation of
$u_\ell$ with respect to the chosen basis and the load vector $\ul{f}_\ell$ is obtained
by testing the function $f$ with the basis functions.

\section{Symmetric interior penalty discontinuous Galerkin (SIPG) discretization}\label{sec:sipg}

Following \cite{LMMT:2015,LT:2015,Takacs:2019}, we use a conforming isogeometric discretization for
each patch and couple the patches using discontinuous Galerkin. We assume
that the domain $\Omega$ is again subdivided into patches
such that~\eqref{eq:omega:1}, \eqref{eq:omega:2} and
Assumption~\ref{ass:1} are satisfied. We assume again to have patch-local
spaces $V_{k,\ell}$ as in~\eqref{eq:local}, which are combined in a non-conforming
(i.e., discontinuous) way, i.e., we just define
\begin{equation}\label{eq:igaspace}
	V_\ell^{n} :=
	\{
	v \in L_2(\Omega)
		\; : \;
		v|_{\Omega_k}  \in V_{k,\ell} 
			\; \mbox{ for } \; k=1,\ldots,K \; \mbox{ and } \; v|_{\partial\Omega}=0
	\}.
\end{equation}
This allows us to drop Assumption~\ref{ass:fully}. Note that we strongly enforce the
Dirichlet boundary conditions in our example; alternatively, one could use the
SIPG method also to enforce the Dirichlet boundary conditions.

\begin{figure}[h]
\begin{center}
	\includegraphics[height=.22\textwidth]{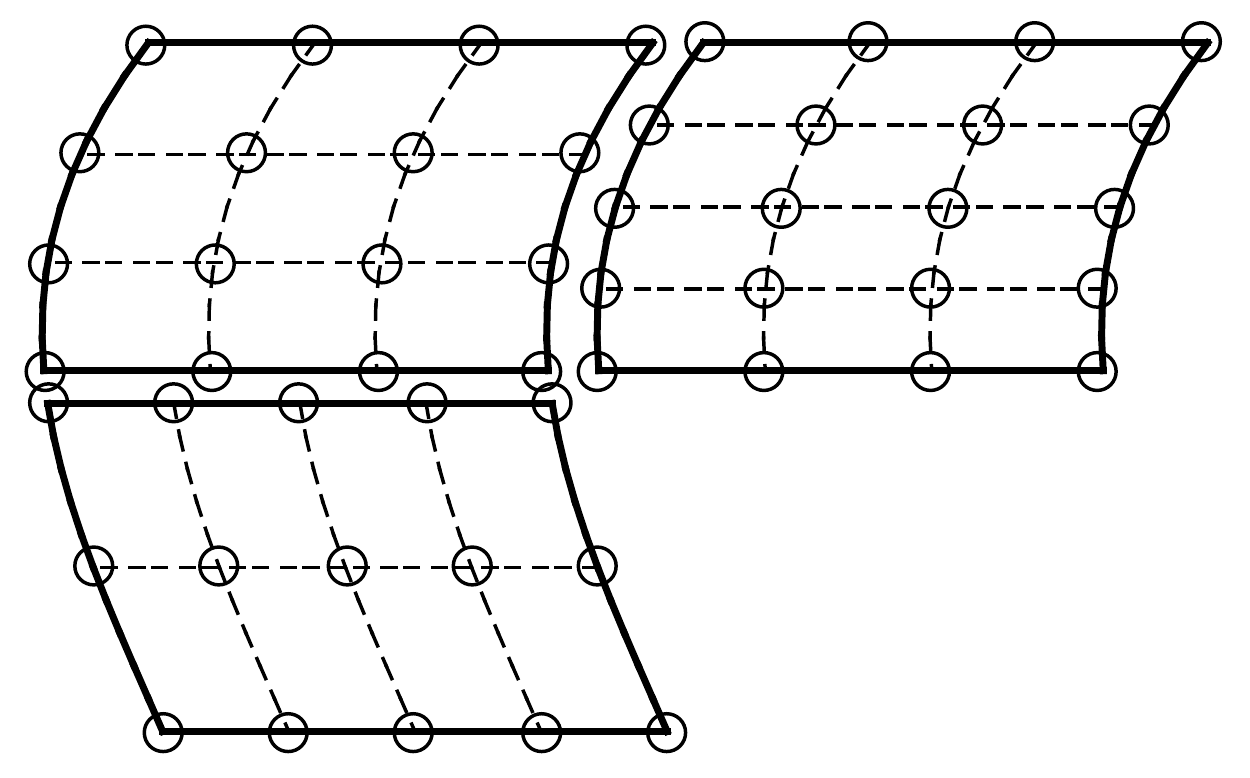}
\end{center}
\caption{Degrees of freedom (represented by Greville points) in non-conforming case}
\label{fig:dof:n}
\end{figure}

Since we have a discontinuous function space, we can visualize the degrees of freedom 
by tearing apart the individual patches, cf. Figure~\ref{fig:dof:n}.
Here, neither the Greville points nor the
basis functions need to agree on the interfaces; the support of each basis function
is contained in one single patch.

Since~$V_\ell^{n}\not\subset V$,
it is not feasible to use the standard Galerkin principle for discretization.
Thus, we couple the patches using the Symmetric Interior
Penalty discontinuous Galerkin (SIPG) method.
First, we define
\[\mathcal{N} := \{ (k,j) \; :\; k<j \mbox{ with } \Omega_k \mbox{ and }
\Omega_l \mbox{ have a common edge.}\}\] to be the set of interface-indices.
For each interface $I_{k,j}$ with $(k,l)\in \mathcal{N}$,
we define the following symbols.
\begin{itemize}
	\item $\nn$ is the outer normal vector of $\Omega_k$. (Thus, 
		$-\nn$ is the outer normal vector of $\Omega_l$.)
	\item $\llbracket \cdot \rrbracket$ is the jump operator:
			$\llbracket u \rrbracket := u|_{\Omega_k} - u|_{\Omega_l}$.
	\item  $\{ \cdot \}$ is the averaging operator:
			$\{ u \} := \tfrac12 ( u|_{\Omega_k} + u|_{\Omega_l})$.
\end{itemize}
Now, we can formulate the SIPG discretization as follows:
Find $u_\ell\in V_\ell^n$ such that
\begin{equation} \label{eq:model:discr}
		(u_\ell,v_\ell)_{A_\ell} = (f,v_\ell)_{L_2(\Omega)}
			\qquad \mbox{for all $v_\ell \in V_\ell^n$},
\end{equation}
where we define
\begin{equation} \label{eq:model:discr2}
\begin{aligned}
		(u,v)_{A_\ell} & := (u,v)_{Q_\ell} - (u,v)_{B_\ell} - (v,u)_{B_\ell},
		\; &(u,v)_{Q_\ell}  := (u,v)_{K_\ell} +  \frac{\sigma p^2}{h_L} (u,v)_{J_\ell},\\
		(u,v)_{K_\ell} & := \sum_{k=1}^K  (\nabla u,\nabla v)_{L_2(\Omega_k)},
		 &(u,v)_{J_\ell}  := \sum_{(k,l)\in \mcN}   (\llbracket u\rrbracket,\llbracket v\rrbracket)_{L_2(I_{k,l})} ,	\\
		(u,v)_{B_\ell} & := \sum_{(k,l)\in \mcN} (\llbracket u \rrbracket,\{\nabla v\}\cdot \nn)_{L_2(I_{k,l})}.
\end{aligned}
\end{equation}
There is some $\sigma_0>0$ independent of the grid size, the spline degree and the
number of patches such that for all $\sigma \ge \sigma_0$, the bilinear form
$(\cdot,\cdot)_{A_\ell}$ is coercive, cf.~\cite[Theorems~8 and~9]{Takacs:2019}.
Thus, for $\sigma \ge \sigma_0$, the Theorem of Lax Milgram states
that the problem~\eqref{eq:model:discr} has exactly one solution. The
combination of Ce\'a's Lemma and a naive approximation
error estimate yields a discretization error estimate of the form
\[
		|u-u_L|_{Q_L}^2 \le c\,  p^2 h_L^2 |u|_{H^2(\Omega)}^2,
\]
cf.~\cite{Takacs:2019}. By doing a more careful analysis, we obtain 
estimates of the form
\[
		|u-u_L|_{Q_L}^2 \le c\, (\log p)^4 h_L^2 |u|_{H^2(\Omega)}^2,
\]
see~\cite[eq.~(15)]{Takacs:2019}. This significantly decreases the influence of the spline degree.

Note that the penalization term has the form
\[
		\frac{\sigma p^2}{h_L},
\]
i.e., it depends on the grid size on the finest grid $h_L$. 
This follows the ideas from~\cite{Gopalakrishnan:Kanschat:2003}. The idea behind that
is that 
\begin{equation}\label{eq:a:galerkin}
		(u_\ell,v_\ell)_{A_\ell} = (u_\ell,v_\ell)_{A_{\ell+1}}
		\quad \mbox{and}\quad
		(u_\ell,v_\ell)_{Q_\ell} = (u_\ell,v_\ell)_{Q_{\ell+1}}
\end{equation}
hold, i.e., we obtain a multigrid solver with conforming coarse-grid correction.
This means that -- on the coarse grid levels -- the discretization is over penalized by a factor of $2^{L-\ell}$, i.e.,
\[
 	\underbrace{\frac{\sigma p^2}{h_L}}_{\displaystyle \widetilde\Sigma_\ell:=} = 2^{L-\ell} \underbrace{\frac{\sigma p^2}{h_\ell}}_{\displaystyle \Sigma_\ell:=},
\]
where $\Sigma_\ell$ is the canonical parameter and $\widetilde\Sigma_\ell$ is the chosen
one.
We will see that this does not cause any problems for the examples we consider;
following~\cite{Gopalakrishnan:Kanschat:2003}, convergence theory only holds if the
number of smoothing steps is sufficiently increased for the coarser grid
levels, cf. Remark~\ref{rem:opcosts}.

Using a basis for the space $V_\ell^n$, we obtain a standard matrix-vector problem:
Find $\ul{u}_\ell \in \mathbb{R}^{N_\ell}$ such that
\begin{equation} \label{eq:linear:system}
			A_\ell \ul{u}_\ell = \ul{f}_\ell.
\end{equation}

\section{Multigrid solvers with Gauss-Seidel smoothers}\label{sec:mg:gs}

In this and the following sections, we discuss several possible choices of
multigrid smoothers, illustrate their convergence behavior with
numerical experiments, and comment on the convergence theory.

We consider conforming discretizations and non-conforming discretizations
which are set up as discussed in the last two sections.
As we have nested spaces in all cases, the matrix $I_{\ell-1}^{\ell}$ is always the canonical embedding
from $V_{\ell-1}$ into $V_{\ell}$ and the restriction matrix $I_{\ell}^{\ell-1}$ is 
its transpose.
The chosen method is presented as pseudo-code as Algorithm~\ref{alg:1},
where we choose $\mu=1$ for the V-cycle method or $\mu=2$ for the W-cycle method.

\begin{algorithm}[h]
\textsc{Multigrid}$\left(\ell, \ul{f}_\ell, \ul{u}_\ell\right)$\\
\mbox{}\qquad// Pre-Smoothing\\
\mbox{}\qquad\textbf{for} $n=1,\ldots,\nu_\ell$\\
\mbox{}\qquad\mbox{}\qquad $\ul{u}_\ell\gets \ul{u}_\ell +  L_\ell^{-1}
                                    \left(\ul{f}_\ell-A_\ell\;\ul{u}_\ell\right)$ \\
\mbox{}\qquad// Coarse-grid correction\\
\mbox{}\qquad\textbf{if} $\ell=1$\\
\mbox{}\qquad\mbox{}\qquad $\ul{u}_\ell\gets \ul{u}_\ell+ I_{\ell-1}^{\ell} A_{\ell-1}^{-1} I_{\ell}^{\ell-1}\left(\ul{f}_\ell - A_\ell
                      \;\ul{u}_\ell\right)$\quad// Direct solver\\
\mbox{}\qquad\textbf{else}\\
\mbox{}\qquad\mbox{}\qquad\textbf{for} $n=1,\ldots,\mu$\\
\mbox{}\qquad\mbox{}\qquad\mbox{}\qquad $\ul{u}_\ell\gets \ul{u}_\ell + I_{\ell-1}^{\ell}
			\textsc{Multigrid}\left(\ell-1, 
									I_{\ell}^{\ell-1}\left(\ul{f}_\ell - A_\ell
                      \;\ul{u}_\ell\right), 0\right)$
                      \\
\mbox{}\qquad// Post-Smoothing\\
\mbox{}\qquad\textbf{for} $n=1,\ldots,\nu_\ell$\\
\mbox{}\qquad\mbox{}\qquad $\ul{u}_\ell\gets \ul{u}_\ell +  L_\ell^{-\top}
                                    \left(\ul{f}_\ell-A_\ell\;\ul{u}_\ell\right)$ \\
\mbox{}\qquad\textbf{return} $\ul{u}_\ell$
\caption{\label{alg:1}Multigrid algorithm}
\end{algorithm}

In the finite element world, Gauss-Seidel smoothers are known to be
very efficient smoothers; thus, as a first attempt, we consider such a smoother.
One forward Gauss-Seidel sweep can be represented by
\[
		\ul{u}_\ell \gets \ul{u}_\ell +  L_\ell^{-1}
                                    \left(\ul{f}_\ell-A_\ell\;\ul{u}_\ell\right),
\]
where $L_\ell$ is a lower-triangular matrix containing the coefficients
of the stiffness matrix $A_\ell$, i.e., it is given by
\[
			(L_\ell)_{i,j} = \left\{\begin{array}{ll}
						(A_\ell)_{i,j} & \mbox{ if } i \ge j \\
						0 & \mbox{ if } i < j \\
				\end{array}\right..
\]
To be able to use our multigrid solver as preconditioner for a 
conjugate gradient solver, the post-smoothing procedure uses
the transposed matrix $L_\ell^\top$, which represents a backward-Gauss-Seidel
sweep.

All tables show the number of iterations
required until the stopping criterion
\[
		\frac{\|A_L \ul{u}_L - \ul{f}_L\|_{\ell^2}}{\|\ul{f}_L\|_{\ell^2}} \le \epsilon := 10^{-8}	
\]
is satisfied.

As usual, the convergence behavior of the overall solver
can be improved if the multigrid method is not just used directly as a solver,
but as a preconditioner within a preconditioned conjugate gradient (PCG) solver.
Thus, we present results for both possibilities; in the following sections we will restrict
ourselves to the more efficient PCG solver. Since the V-cycle and the
W-cycle methods yield comparable iteration counts, we present the results for the
more efficient V-cycle only. The number of smoothing steps is chosen as $\nu_\ell:=1$ in
all cases.

The multigrid solver was implemented in C++ based on the G+Smo
library~\cite{gismoweb}. 
The tables shown in the remainder of this section are obtained with the following
command line instructions, where the values $L$ and $p$ are substituted accordingly.

\begin{center}
\begin{minipage}{.95\textwidth}
\begin{lstlisting}[mathescape,columns=flexible,basicstyle=\ttfamily]
> git clone https://github.com/gismo/gismo.git
> cd gismo
> make
> cd build/bin
> ./multiGrid_example -g domain2d/ldomain.xml -r $L$ -p $p$
      -s gs -i d                                         $\tabnr{tab:GS1}{a}$
> ./multiGrid_example -g domain2d/yeti_mp2.xml -r $L$ -p $p$
      -s gs -i d                                         $\tabnr{tab:GS2}{a}$
\end{lstlisting}
\end{minipage}
\end{center}
The results
for the PCG experiments, presented in Tables~\ref{tab:GS1}~(b) and~\ref{tab:GS2}~(b),
are obtained by replacing the option {\tt -i d} by the option {\tt -i cg}.

\begin{table}[ht]
\begin{center}
    \begin{tabular}{l|rrrrrrr|rrrrrrr}
    \toprule
						& \multicolumn{7}{c|}{(a) Direct -- Conforming}  & \multicolumn{7}{c}{(b) PCG -- Conforming}  \\    
    \midrule
    $L\,\backslash\, p$    &  2 &  3 &  4 &  5 &  6 &  7 &  8    &  2 &  3 &  4 &  5 &  6 &  7 &  8\\
    \midrule
	 4                        &  9 & 24 & 74 &\dv &\dv &\dv &\dv    &  8 & 15 & 28 & 53 &\dv &\dv &\dv\\
	 5                        &  9 & 24 & 73 &\dv &\dv &\dv &\dv    &  8 & 15 & 28 & 52 &\dv &\dv &\dv\\
	 6                        &  9 & 24 & 72 &\dv &\dv &\dv &\dv    &  8 & 15 & 28 & 53 &\dv &\dv &\dv\\
	 7                        & 10 & 24 & 72 &\dv &\dv &\dv &\dv    &  8 & 15 & 28 & 54 &\dv &\dv &\dv\\
	 8                        & 10 & 24 & 72 &\dv &\dv &\dv &\dv    &  8 & 15 & 28 & 54 &\dv &\dv &\dv\\
	\bottomrule
	\end{tabular}
\end{center}
\caption{V-cycle with Gauss-Seidel smoother for the L-shaped domain}
\label{tab:GS1}
\end{table}
\begin{table}[ht]
\begin{center}
    \begin{tabular}{l|rrrrrrr|rrrrrrr}
    \toprule
						& \multicolumn{7}{c|}{(a) Direct -- Conforming}  & \multicolumn{7}{c}{(b) PCG -- Conforming}  \\    
    \midrule
    $L\,\backslash\, p$    &  2 &  3 &  4 &  5 &  6 &  7 &  8    &  2 &  3 &  4 &  5 &  6 &  7 &  8\\
    \midrule
	 3                        & 13 & 25 & 75 &\dv &\dv &\dv &\dv    & 10 & 15 & 28 & 54 &\dv &\dv &\dv\\
	 4                        & 14 & 25 & 74 &\dv &\dv &\dv &\dv    & 10 & 15 & 28 & 53 &\dv &\dv &\dv\\
	 5                        & 15 & 25 & 74 &\dv &\dv &\dv &\dv    & 11 & 16 & 28 & 54 &\dv &\dv &\dv\\
	 6                        & 15 & 25 & 72 &\dv &\dv &\dv &\dv    & 11 & 16 & 29 & 54 &\dv &\dv &\dv\\
	 7                        & 17 & 25 & 73 &\dv &\dv &\dv &\dv    & 12 & 16 & 29 & 55 &\dv &\dv &\dv\\
	\bottomrule
	\end{tabular}
\end{center}
\caption{V-cycle with Gauss-Seidel smoother for the Yeti footprint}
\label{tab:GS2}
\end{table}

In Table~\ref{tab:GS1}, we observe that the multigrid solver is certainly
robust in the grid size. While this approach is very efficient
for small numbers of spline degrees, we observe that the convergence rates deteriorate
significantly if the spline degree is increased. On the right side of the table, one
can see the iteration counts for a preconditioned conjugate gradient method where one
V-cycle of the mentioned multigrid method is used as a preconditioner. We observe that
the iteration counts are significantly smaller than the iteration counts obtained by
directly applying the multigrid solver. However, we simultaneously observe that we do
not observe any qualitative improvement.
In Table~\ref{tab:GS2}, we give the iteration counts for the Yeti footprint. We observe that
-- despite the fact that the geometry function is now non-trivial --
the iteration counts are very similar to those of the L-shaped domain.

When one turns to the non-conforming discretizations, it immediately turns out that the
multigrid solver utilizing the Gauss-Seidel smoother does not converge well at all.

One can show using standard arguments that the multigrid method converges with rates
that are independent of the grid size and of the number of patches.
The convergence analysis (for the conforming case) employs estimates
that increase exponentially in the spline degree, cf.~\cite{Gahalaut:Kraus:Tomar:2013}.
The numerical experiments show that this is not only a matter of the proof.

Since we did not obtain convincing results,
we are interested in more advanced smoothers that work well also for SIPG discretizations
and which do not deteriorate if the spline degree is increased.

\section{Multigrid with subspace corrected mass smoother}\label{sec:mg:scms}

In this section, we employ the subspace corrected mass smoother as introduced
in~\cite{Hofreither:Takacs:2017}. That smoother requires that the spline space
has a tensor-product structure; in our examples, we have such a structure on each
patch but not on the overall domain.
The extension of that smoother to conforming discretizations has been discussed
in~\cite{Takacs:2018} based on a domain-decomposition approach. The key idea was to
decompose all degrees of freedom on a per-piece bases. Pieces are the patch-interiors and the
interface pieces. In two dimensions, the interface pieces are the edges and the
vertices of each of the patches. In three dimensions, the interface pieces are the faces,
the edges and the vertices of each of the patches. The decomposition of the degrees of
freedom is depicted in Figure~\ref{fig:decomp} (left).
\begin{figure}[h]
\begin{center}
	\includegraphics[height=.22\textwidth]{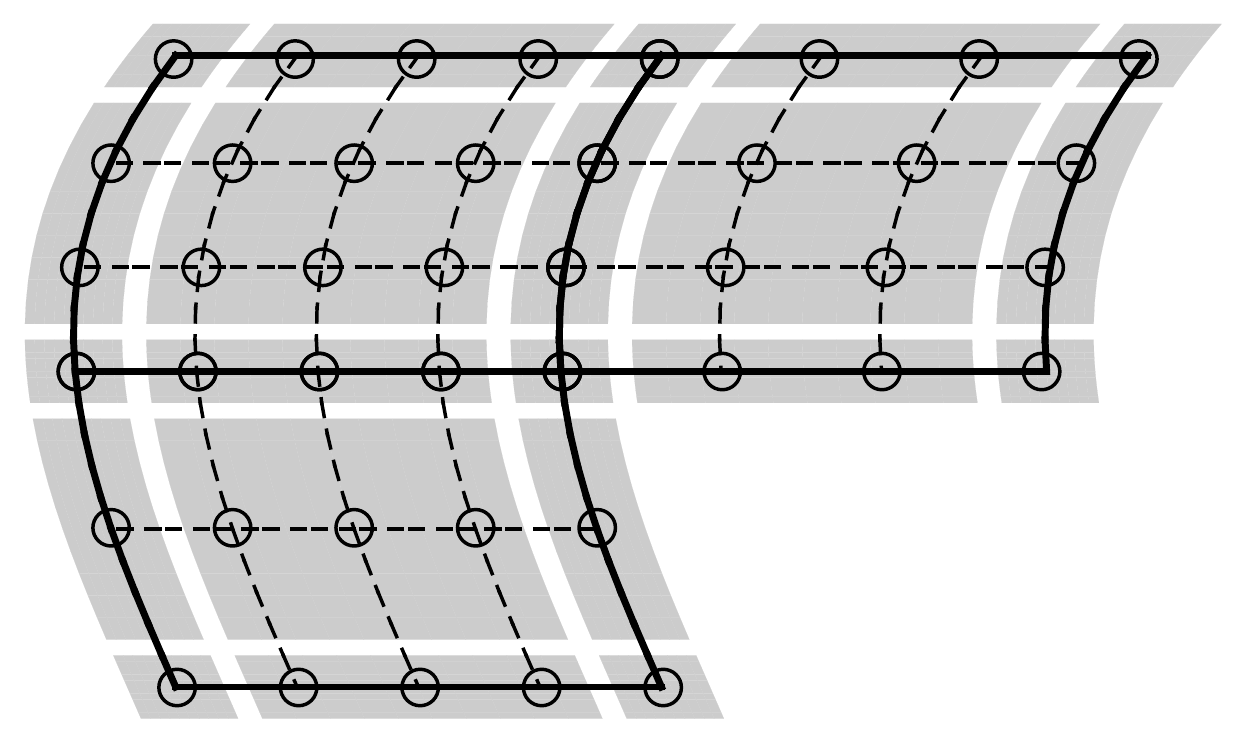}
	\includegraphics[height=.22\textwidth]{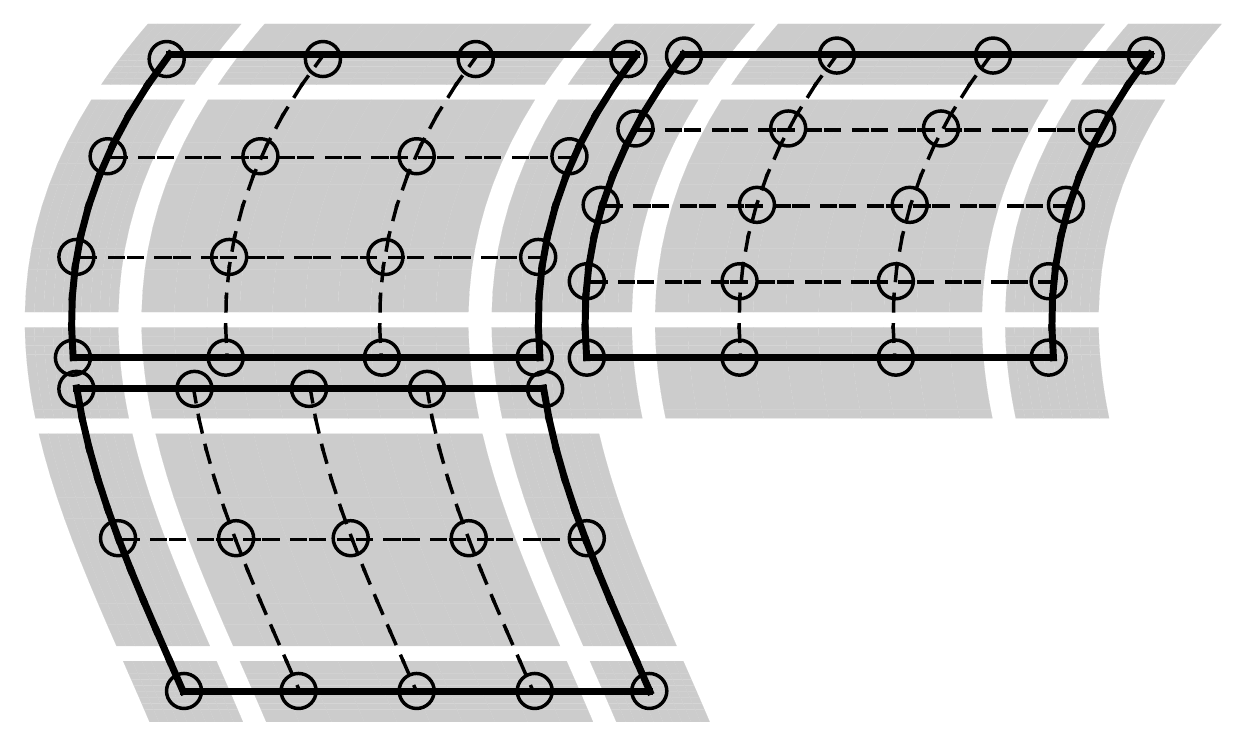}
\end{center}
\caption{Decomposition of the degrees of freedom (represented by the Greville point)}
\label{fig:decomp}
\end{figure}

The piece-local smoothers, which we denote by $L_{\ell,T}^{-1}$, are defined
as follows. For $T$ being a patch-interior, we choose
$L_{\ell,T}^{-1}$ to be the subspace corrected mass smoother as proposed
in~\cite{Hofreither:Takacs:2017}. We choose the scaling parameter
(which was called~$\sigma$ in~\cite{Hofreither:Takacs:2017}) to be
$\delta^{-1} h_\ell^{-2}$ for some suitable chosen
parameter $\delta>0$. If $T$ is an interface piece, we choose
\[
       L_{\ell,T}:= P_T^{\top} A_\ell P_T,
\]
where the matrix $P_{\ell,T}$ represents the embedding of the piece $T$ in the whole space.
The symbol $L_{\ell,T}^{-1}$ refers to be the application of a direct solver. 
Applying a direct solver on the interfaces is
feasible since the interfaces have much smaller numbers of degrees of freedom
than the interiors of the patches.
The overall smoother is just an additive composition of the piece-local smoothers, i.e., we
choose
\begin{equation}\label{eq:Ldef}
		L_\ell^{-1} := \tau \sum_T P_{\ell,T} L_{\ell,T}^{-1} P_{\ell,T}^\top,
\end{equation}
where the sum is taken over all pieces $T$. Here, $\tau>0$
is some damping parameter to be chosen.

The convergence theory from~\cite{Takacs:2018} can be summarized by the following theorem.

\begin{theorem}\label{thrm:converg:conf}
	Assume that $\Omega \subset \mathbb{R}^2$
	is such that full elliptic regularity is satisfied
	(cf.~\cite[Assumption~3.1]{Takacs:2018}).
	Consider the conforming discretization and a multigrid solver
	with the smoother~\eqref{eq:Ldef}. 
	There are constants $\tau^*$, $\delta^*$ and $\theta$ which are independent of $K$, $h$, $L$ and $p$
	(but may depend particularly on the geometry functions
	and the maximum number of neighbors of a patch) such that for
	\begin{equation}\label{eq:nu:1}
		\tau \in (0,\tau^*),\qquad \delta \in (0,\delta^*) \qquad\mbox{and}\qquad
		\nu_\ell > \nu_\ell^* := p\; \frac{\tau^*}{\tau} \; \frac{\delta^*}{\delta}\; \theta,
	\end{equation}
	the W-cycle multigrid method converges with
	a convergence rate $q\le \max_\ell \nu_\ell^*/\nu_\ell$.
\end{theorem}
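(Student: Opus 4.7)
The plan is to follow the standard Hackbusch framework, reducing W-cycle convergence to an approximation property for the coarse-grid correction plus a smoothing property for $L_\ell^{-1}$, both expressed in terms of a common reference norm $\|\cdot\|_{X_\ell}$. I would choose $\|\cdot\|_{X_\ell}$ as a discrete analogue of $h_\ell \|\cdot\|_{H^2}$, so that $h_\ell^{-1}\|\cdot\|_{X_\ell}$ controls a residual-type $L_2$-norm while being itself controlled by $\|\cdot\|_{A_\ell}$ via elliptic regularity.

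For the approximation property I would invoke the assumed $H^2$-regularity of~\eqref{eq:model}: for the continuous solution $w$ of $-\Delta w = r$ with $r$ any discrete residual, $\|w\|_{H^2(\Omega)}$ is controlled by $\|r\|_{L_2(\Omega)}$. On each patch I would pull back with the geometry map $G_k$ and apply the $p$-robust tensor-product spline quasi-interpolation estimates of~\cite{Takacs:Takacs:2015,Sande:Manni:Speleers:2018} to obtain a coarse-grid projection $Q_{\ell-1}$ into $V_{\ell-1}^c$; Assumption~\ref{ass:fully} is what lets the patch-local projections glue into a conforming function. Summing over patches delivers the approximation property $\|e - Q_{\ell-1} e\|_{X_\ell} \le c_A \|e\|_{A_\ell}$, with constants depending only on the geometry maps and the maximum number of patch neighbors.

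For the smoothing property I would exploit the additive structure of~\eqref{eq:Ldef}: $L_\ell^{-1}$ is an additive Schwarz operator over the piece decomposition of Figure~\ref{fig:decomp}, and a coloring of the pieces whose chromatic number is bounded by the maximum patch valence yields a stable splitting with constant governed by $\tau^*$. On each patch-interior piece I would invoke the $p$-robust smoothing estimate for the subspace corrected mass smoother from~\cite{Hofreither:Takacs:2017}, whose constant is controlled through the mass-scaling parameter and hence through $\delta$. On each interface piece, $L_{\ell,T}^{-1}$ is the exact inverse of the $A_\ell$-restriction to the piece, so the local smoothing property reduces to an inverse/trace inequality along the interface spline space; this is where the linear factor $p$ in $\nu_\ell^*$ enters. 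Combining the two estimates gives a two-grid rate bounded by $\nu_\ell^*/\nu_\ell$, and the W-cycle rate of the same order then follows from the standard Hackbusch induction with $\mu = 2$.

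The main obstacle is the interface-piece analysis. One has to check that the combination of patch-local inverse inequalities with the exact interface solvers does not introduce hidden $p$- or $K$-dependence into the stable-splitting constant $\tau^*$, and that the interaction between the interior mass-based smoothers and the exact interface solvers is captured correctly by a single damping parameter $\tau$. Once this bookkeeping is in place, the factor of $p$ in the required number of smoothing steps follows directly from the trace-type inequality on the interface spline spaces, and all constants in~\eqref{eq:nu:1} can be traced back to the patch-local ingredients and to the coloring number of the piece decomposition.
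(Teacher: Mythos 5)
Your overall strategy is the one the paper actually uses: Theorem~\ref{thrm:converg:conf} is quoted from \cite{Takacs:2018}, and that proof --- mirrored in this paper's Appendix for the SIPG case via Lemmas~\ref{lem:smp1} and~\ref{lem:approx} --- is precisely the Hackbusch splitting into a smoothing property for the additive piece-wise smoother and an approximation property obtained from full elliptic regularity plus $p$-robust spline approximation, extended to the W-cycle by the standard induction. You also correctly locate the origin of the factor $p$ in $\nu_\ell^*$ (the interface pieces, via trace-type estimates and the scaling $\|\psi\|_{L_2(0,1)}^2\eqsim p^{-1}h_\ell$ of the boundary basis functions) and the roles of $\tau$ (absorbing the coloring/overlap constant of the piece decomposition) and $\delta$ (the mass-scaling in the interior smoother).

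The one step that would fail as written is your choice of intermediate norm and the resulting form of the approximation property. A norm $\|\cdot\|_{X_\ell}\sim h_\ell\|\cdot\|_{H^2}$ is \emph{not} controlled by $\|\cdot\|_{A_\ell}\sim|\cdot|_{H^1}$ on the spline space (the inverse estimate costs a factor $p^2 h_\ell^{-1}$, so the constant in $\|e-Q_{\ell-1}e\|_{X_\ell}\le c_A\|e\|_{A_\ell}$ cannot be $p$-robust), and elliptic regularity bounds $\|w\|_{H^2}$ by the $L_2$-norm of the \emph{data}, not by the energy norm of the error. More importantly, an approximation property with $\|e\|_{A_\ell}$ on the right-hand side cannot be multiplied with a smoothing property to yield a contraction, since the energy norm of the error does not decay under smoothing; the right-hand side must be a residual-type (dual) norm. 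The correct bookkeeping --- visible in the Appendix --- takes $\widetilde{L}_\ell\eqsim K_\ell+h_\ell^{-2}M_\ell$ as intermediate matrix, proves the two-sided bound $A_\ell\le L_\ell\lesssim p\,\widetilde{L}_\ell$ so that $\|\widetilde{L}_\ell^{-1}A_\ell(I-L_\ell^{-1}A_\ell)^{\nu}\|_{\widetilde{L}_\ell}\lesssim p/\nu$, and establishes the approximation property in the dual form $\|(I-I_{\ell-1}^{\ell}A_{\ell-1}^{-1}I_{\ell}^{\ell-1}A_\ell)A_\ell^{-1}\widetilde{L}_\ell\|_{\widetilde{L}_\ell}\lesssim 1$ via the Aubin--Nitsche-type argument in which $f_\ell$ is the $L_2$-representer of the residual. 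With that correction the remainder of your outline goes through.
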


Note that the terms $\frac{\tau^*}{\tau}$ and $\frac{\delta^*}{\delta}$ imply that the
convergence degrades if too small values of those parameters are chosen. Thus, it is
of interest to choose these parameters in a rather optimal way.

Note that the convergence theorem requires full elliptic regularity
(cf.~\cite[Assumption~3.1]{Takacs:2018}). Thus, it is applicable to the
Yeti footprint but it is not directly applicable to the L-shaped domain. 
Convergence results for the case with
full elliptic regularity often carry over \emph{in practice} to cases where
that regularity assumption does not hold. The same behavior can be observed
for the numerical experiments we have considered.

Observe that the convergence theorem suggests that the number of smoothing steps
should increase with $p$. As already outlined in~\cite{Takacs:2018}, this seems
to be too pessimistic since the numerical experiments have shown that $\nu_\ell:=1$
in all cases yields good convergence rates.

The numerical experiments are again applied within the same setup as in the
last section. We set up a V-cycle multigrid method with 1+1 smoothing steps
of the proposed smoother $L_\ell$ (on all grid levels). The damping parameter~$\tau$
is chosen as indicated with the option {\tt --MG.Damping} and the scaling parameter~$\delta$
is chosen as indicated with the option {\tt --MG.Scaling} below.
The tables for the conforming case shown in this section are obtained with the following
code, where the values $L$ and $p$ are substituted accordingly:
\begin{center}
\begin{minipage}{.95\textwidth}
\begin{lstlisting}[mathescape,columns=flexible,basicstyle=\ttfamily]
> ./multiGrid_example -g domain2d/ldomain.xml -r $L$ -p $p$
      -s scms --MG.Damping 1 --MG.Scaling .12 -i d        $\tabnr{tab:SCMS1}{a}$
> ./multiGrid_example -g domain2d/yeti_mp2.xml -r $L$ -p $p$
      -s scms --MG.Damping .25 --MG.Scaling .2 -i cg     $\tabnr{tab:SCMS3}{a}$
\end{lstlisting}
\end{minipage}
\end{center}
The results for the PCG experiments presented Table~\ref{tab:SCMS1}~(b)
are obtained by replacing the option {\tt -i d} by the option {\tt -i cg}.

\begin{table}[ht]
\begin{center}
    \begin{tabular}{l|rrrrrrr|rrrrrrr}
    \toprule
						& \multicolumn{7}{c|}{(a) Direct -- Conforming}  & \multicolumn{7}{c}{(b) PCG -- Conforming}  \\    
    \midrule
    $L\,\backslash\, p$    &  2 &  3 &  4 &  5 &  6 &  7 &  8    &  2 &  3 &  4 &  5 &  6 &  7 &  8\\
    \midrule
	 4                        & 27 & 23 & 22 & 20 & 17 & 16 & 15    & 16 & 14 & 13 & 12 & 11 & 11 & 10\\
	 5                        & 29 & 27 & 27 & 26 & 24 & 24 & 22    & 17 & 16 & 15 & 15 & 14 & 14 & 14\\
	 6                        & 30 & 30 & 28 & 27 & 27 & 27 & 26    & 17 & 17 & 16 & 16 & 15 & 16 & 15\\
	 7                        & 31 & 30 & 29 & 28 & 28 & 27 & 27    & 17 & 17 & 17 & 16 & 16 & 16 & 16\\
	 8                        & 32 & 31 & 30 & 29 & 28 & 28 & 28    & 18 & 17 & 17 & 17 & 16 & 16 & 16\\	\bottomrule
	\end{tabular}
\end{center}
\caption{V-cycle with subspace corrected mass smoother for the L-shaped domain}
\label{tab:SCMS1}
\end{table}

All numerical experiments show that the proposed method is robust both in the grid
size and the spline degree. However, when comparing the results for the Yeti footprint
from Table~\ref{tab:SCMS3}~(a) with the corresponding results for the L-shaped domain
from Table~\ref{tab:SCMS1}~(b), we see that the multigrid solver suffers from
distorted geometry functions.

The numbers for the Yeti footprint seem not to be
completely robust in the grid size. Since we have given a convergence
theory, we know that the convergence numbers are bounded uniformly. Thus, the observed
behavior is pre-asymptotic. The reason for this is that on coarser grid levels, the
geometry is not resolved exactly. Let $A_\ell$ be the original stiffness matrix and
$\widehat{A}_\ell$ be the simplified stiffness matrix obtained by neglecting the geometry
function. Then, we have
\[
	\kappa( \widehat{A}_\ell^{-1} A_\ell )
			= \sup_{v_\ell \in V_\ell} \frac{|v_\ell|_{H^1(\Omega)}^2 }{\sum_{k=1}^K|v_\ell \circ G_k|_{H^1(\widehat{\Omega})}^2 }
			\sup_{v_\ell \in V_\ell}\frac{\sum_{k=1}^K|v_\ell \circ G_k|_{H^1(\widehat{\Omega})}^2 }{ |v_\ell|_{H^1(\Omega)}^2 },
\]
which yields
\begin{equation}\label{eq:sgn}
\begin{aligned}
	\kappa( \widehat{A}_0^{-1} A_0 ) & \le \cdots \le
	\kappa( \widehat{A}_{L-1}^{-1} A_{L-1}) \le
	\kappa( \widehat{A}_{L}^{-1} A_{L}) \\&\le
	\sup_{v \in V} \frac{ |v|_{H^1(\Omega)}^2 }{\sum_{k=1}^K|v \circ G_k|_{H^1(\widehat{\Omega})}^2 }
			\sup_{v \in V}\frac{\sum_{k=1}^K|v \circ G_k|_{H^1(\widehat{\Omega})}^2 }{ |v|_{H^1(\Omega)}^2 },
\end{aligned}
\end{equation}
which can be finally bounded by a constant times some power of the quantity
$\|\nabla G\|_{L_\infty(\widehat{\Omega})}\|(\nabla G)^{-1}\|_{L_\infty(\widehat{\Omega})}$.
Of none of these relations is satisfied by equality. In such a case, the iteration
counts are likely to increase if the grid gets refined.
For more on this topic, see~\cite[Section~7.4]{Sogn:2018}.

As a next step, we turn towards the non-conforming discretizations. 
Here, each degree of freedom is assigned to exactly one patch. So,
it would be tempting to set up a patch-wise splitting of the degrees of
freedom. Unfortunately, numerical experiments have shown that this approach
does not work well. So, we follow the approach from~\cite{Takacs:2018} also
in the non-conforming case and split the degrees of freedom again into pieces $T$.
This means that we avoid breaking the coupling which was enforced by the penalty term.
So, the degrees of freedom belonging to
one edge (face, vertex) are considered to be one piece, even if the degrees of
freedom belong to different patches, see Figure~\ref{fig:decomp} (right).

For this choice, we can give the following convergence theorem.
\begin{theorem}\label{thrm:converg}
	Assume that $\Omega\subset \mathbb{R}^2$ is such that full elliptic regularity holds
	(cf.~\cite[Assumption~4]{Takacs:2019}) and assume that the geometry
	functions (but not necessarily the discretizations) agree on the
	interfaces (cf.~\cite[Assumption~2]{Takacs:2019}).
	Consider the SIPG discretization and a multigrid solver
	with the smoother~\eqref{eq:Ldef}.
	There are constants $\tau^*$, $\delta^*$ and $\theta$ which are independent of $K$, $h$, $L$ and $p$
	(but may depend particularly on the geometry functions
	and the maximum number of neighbors of a patch) such that for
	\begin{equation}\label{eq:nu:2}
		\tau \in (0,\tau^*), \quad \delta \in (0,\delta^*) \quad\mbox{and}\quad
		\nu_\ell > \nu_\ell^* := 
		2^{L-\ell}\;(1+L-\ell)^2\;p\;(\log p)^4\; \frac{\tau^*}{\tau} \; \frac{\delta^*}{\delta}\; \theta,
	\end{equation}
	the W-cycle multigrid method converges with
	a convergence rate $q\le \max_\ell \nu_\ell^*/\nu_\ell$.
\end{theorem}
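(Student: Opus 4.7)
The plan is to adapt the W-cycle convergence analysis of \cite{Takacs:2018} to the SIPG setting by following Hackbusch's classical framework: one establishes a smoothing property for $L_\ell^{-1}$ and an approximation property of the two-grid error operator, and then the inductive W-cycle argument combines them to give the convergence factor $q\le\nu_\ell^*/\nu_\ell$. My task is to trace where the three additional factors $2^{L-\ell}$, $(1+L-\ell)^2$ and $(\log p)^4$ arise relative to Theorem~\ref{thrm:converg:conf}.

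For the \emph{smoothing property}, by the additive structure \eqref{eq:Ldef} the analysis reduces to showing stability of the piece-wise subspace splitting from Figure~\ref{fig:decomp}~(right) in the canonically penalized norm $(\cdot,\cdot)_{Q_\ell}$ with the level-consistent parameter $\Sigma_\ell = \sigma p^2/h_\ell$, with stability constants depending only on $\tau^*/\tau$, $\delta^*/\delta$ and $p$ and independent of $K$, $h_\ell$ and $L$. On patch interiors this is exactly the SCMS estimate of \cite{Hofreither:Takacs:2017}. On interface pieces, which in the SIPG case couple degrees of freedom from two neighboring patches, exact local solves are used; stability in the broken $H^1$ part follows from a finite-overlap argument combined with a $p$-robust trace inequality for tensor-product splines, while stability of the jump contribution is immediate because each jump $\llbracket u\rrbracket$ is supported on a single interface piece. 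Since $A_\ell$ uses the over-penalization $\widetilde\Sigma_\ell = 2^{L-\ell}\Sigma_\ell$ instead of the canonical $\Sigma_\ell$, translating the bound from the $Q_\ell$-norm back to the $A_\ell$-norm contributes the factor $2^{L-\ell}$.

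For the \emph{approximation property}, I would combine the full elliptic regularity assumption with the SIPG error estimate
\[
        |u-u_L|_{Q_L}^2 \le c\,(\log p)^4\,h_L^2\,|u|_{H^2(\Omega)}^2
\]
from \cite[eq.~(15)]{Takacs:2019} via an Aubin--Nitsche-type duality to obtain a standard approximation bound on each pair of levels $\ell-1,\ell$. The $(\log p)^4$ factor is inherited directly from this estimate, and the linear factor $p$ stems from the $L_2$-to-$Q_\ell$ inverse estimate required to move the right-hand side into the natural energy norm. Combining this with the smoothing property in the routine Hackbusch/Braess style yields a two-grid contraction bound with the stated dependencies.

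Passing from the two-grid to the W-cycle estimate is then largely routine, except that the canonical and the actual penalties disagree by $2^{L-\ell}$, which varies from level to level. Following the analysis of \cite{Gopalakrishnan:Kanschat:2003}, the recursive coarse-grid correction accumulates this mismatch across $L-\ell$ levels and produces the extra factor $(1+L-\ell)^2$. Multiplying all contributions gives exactly the lower bound \eqref{eq:nu:2} on $\nu_\ell^*$. The main obstacle will be the interface-piece stability in the smoothing property: one has to rewrite the non-local jump contribution $(\llbracket u\rrbracket,\llbracket u\rrbracket)_{L_2(I_{k,l})}$ as a sum of piece-local contributions controlled by $(u,u)_{L_{\ell,T}}$ with constants independent of $K$ and $h_\ell$. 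This is precisely where the assumption that the geometry functions match across interfaces (though the discretizations may differ) becomes essential, since it allows the jumps to be compared to the local mass on a common reference interface.
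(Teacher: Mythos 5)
Your overall architecture (Hackbusch-style smoothing property plus approximation property, then the standard two-grid-to-W-cycle argument) is the same as the paper's. But your accounting of where the factors in $\nu_\ell^*$ come from does not survive contact with the actual difficulties, and one step you dismiss as easy is in fact the key new technical ingredient.

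The main gap is your claim that ``stability of the jump contribution is immediate because each jump $\llbracket u\rrbracket$ is supported on a single interface piece.'' It is not immediate, because the decomposition in Figure~\ref{fig:decomp} separates the \emph{vertex} degrees of freedom from the \emph{edge} degrees of freedom, and both contribute to the jump on each adjacent interface. To bound $\sum_T \|P_TP_T^\top \ul u_\ell\|_{J_\ell}^2$ one must control the vertex pieces, i.e.\ point values $u_\ell|_{\Omega_k}(T)$ of tensor-product splines, in terms of $|u_\ell|_{H^1}$ and a scaled $L_2$-norm. The paper proves a dedicated trace-type inequality for this (Lemma~\ref{lem:vertex}, via an eigenfunction/level-set decomposition), and this is precisely where the factor $\bigl(\log(1+p^4h^{-2}\theta^{-2})\bigr)^2\eqsim(\log p)^2(1+L-\ell)^2$ enters --- already in the \emph{smoothing} bound $L_\ell\lesssim p(\log p)^2(1+L-\ell)^2 2^{L-\ell}\,\frac{\tau^*}{\tau}\frac{\delta^*}{\delta}\,\widetilde L_\ell$ of Lemma~\ref{lem:smp1}. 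Your proposal contains no mechanism to produce this bound, and without it the smoothing property fails for the chosen piece decomposition.

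As a consequence your bookkeeping of the remaining factors is also misplaced. In the paper, $(1+L-\ell)^2$ does \emph{not} arise from accumulating the penalty mismatch over the W-cycle recursion \`a la Gopalakrishnan--Kanschat; it is already present in the per-level two-grid bound through the logarithm $\log(p^4 4^{L-\ell}h_\ell^2)$ above (and, in squared form, through $\log\sigma_\ell$ in the approximation property). The factor $p$ comes from the subspace corrected mass smoother bound on the patch interiors (the analogue of \cite[Lemma~4.7]{Takacs:2018}), not from an $L_2$-to-$Q_\ell$ inverse estimate in the duality argument. And $(\log p)^4$ splits as $(\log p)^2$ from the smoothing property times $(\log p)^2$ from the approximation property (the latter via \cite[Theorems~12 and~13]{Takacs:2019} together with a stability estimate in the $Q_\ell^+$-norm, Lemma~\ref{lem:stab}); you will not obtain a $(\log p)^4$ approximation property and a log-free smoothing property with this subspace splitting. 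Your identification of the source of the $2^{L-\ell}$ factor (the over-penalization $\widetilde\Sigma_\ell=2^{L-\ell}\Sigma_\ell$ and the passage to $\widetilde L_\ell=Q_\ell+(1+2^{\ell-L}h_\ell^{-2})M_\ell$) is the one piece that matches the paper.
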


We give the proof of this theorem in the Appendix; the proof is based on
the error estimates from~\cite{Takacs:2019}.

One might observe that the number of smoothing steps required by this convergence
theorem increases like $(1+L-\ell)2^{L-\ell}$. This follows the approach suggested
in~\cite{Gopalakrishnan:Kanschat:2003} and is related to the chosen over-penalization
discussed in Section~\ref{sec:sipg}.
\begin{remark}\label{rem:opcosts}
Note that the
number of degrees of freedom on the coarser grid levels is smaller by a factor of
$2^{d(L-\ell)}$. So, also using these additional smoothing steps, the overall complexity of
the multigrid solver is still linear in the number of unknowns on the finest grid level if
(a) $d\ge3$ or (b) the V-cycle is considered. If we consider $d=2$ and the W-cycle, the
choice~\eqref{eq:nu:2} yields that the computational complexity grows like
$N_L L^3$, where $N_L$ is the number of unknowns on the finest grid level.
In the numerical experiments, we did not observe that increasing the number
of smoothing steps has been required. Analogously to the conforming case, also
the stated dependence on $p$ is too pessimistic; thus, we again
choose $\nu_\ell:=1$ on all grid levels.
\end{remark}

Now, we provide numerical experiments for the SIPG discretization.
Theoretically, we could just use exactly the discretization that has been chosen
for the conforming case. This, however, yields a (particularly uninteresting)
special case since Assumption~\ref{ass:fully} holds. In this special case, we have
$V_\ell^c \subset V_\ell^n$ and the SIPG formulation converges to the conforming
discretization for $\sigma \rightarrow \infty$. Instead, we are interested
in a discretization such that Assumption~\ref{ass:fully} does not hold: We
modify the setup of the spaces. For one third of the patches, we use the original
spline space $S_{p,h_\ell}(\widehat{\Omega})$. For one third of the patches, we use
the spline space $S_{p+1,2h_\ell}(\widehat{\Omega})$. For the last third of the
patches, we use the spline space $S_{p,2h_\ell}(\widehat{\Omega})$. This particular
setting is obtained with the command line option {\tt --NonMatching}.
In this way, we obtain a setup where a conforming discretization is not possible.

The tables for the non-conforming case shown in this section are obtained with the following
code, where the values $L$ and $p$ are substituted accordingly:
\begin{center}
\begin{minipage}{.95\textwidth}
\begin{lstlisting}[mathescape,columns=flexible,basicstyle=\ttfamily]
> ./multiGrid_example -g domain2d/ldomain.xml -r $L$ -p $p$ --DG
      --NonMatching -s scms --MG.Damping .9 --MG.Scaling .12
      -i d                                            $\tabnr{tab:SCMS2}{a}$
> ./multiGrid_example -g domain2d/yeti_mp2.xml -r $L$ -p $p$ --DG
      --NonMatching -s scms --MG.Damping .25 --MG.Scaling .2
      -i cg                                           $\tabnr{tab:SCMS3}{b}$
\end{lstlisting}
\end{minipage}
\end{center}
The results for the PCG experiments presented in Table~\ref{tab:SCMS2}~(b)
are obtained by replacing the option {\tt -i d} by the option {\tt -i cg}.

\begin{table}[ht]
\begin{center}
    \begin{tabular}{l|rrrrrrr|rrrrrrr}
    \toprule
						& \multicolumn{7}{c|}{(a) Direct -- Non-conforming}  & \multicolumn{7}{c}{(b) PCG -- Non-conforming}  \\    
    \midrule
    $L\,\backslash\, p$    &  2 &  3 &  4 &  5 &  6 &  7 &  8    &  2 &  3 &  4 &  5 &  6 &  7 &  8\\
    \midrule
	 4                        & 22 & 28 & 34 & 33 & 23 & 35 & 23    & 17 & 16 & 15 & 14 & 13 & 12 & 12\\
	 5                        & 71 & 48 & 45 & 69 & 35 & 32 & 57    & 19 & 19 & 18 & 17 & 17 & 17 & 16\\
	 6                        & 73 & 71 & 70 & 46 & 69 & 57 &\hspace{-.5em}145    & 21 & 20 & 20 & 19 & 19 & 20 & 19\\
	 7                        &\hspace{-.2em}100 &\hspace{-.5em}106 & 86 & 71 & 92 & 67 & 61    & 22 & 22 & 21 & 21 & 21 & 22 & 22\\
	 8                        & 90 & 94 &\hspace{-.5em}127 & 98 &\hspace{-.5em}291 &\hspace{-.5em}106 & 73    & 23 & 23 & 22 & 22 & 22 & 22 & 23\\
	\bottomrule
	\end{tabular}
\end{center}
\caption{V-cycle with subspace corrected mass smoother for the L-shaped domain}
\label{tab:SCMS2}
\end{table}

\begin{table}[h]
\begin{center}
    \begin{tabular}{l|rrrrrrr|rrrrrrr}
    \toprule
						& \multicolumn{7}{c|}{(a) PCG -- Conforming}  & \multicolumn{7}{c}{(b) PCG -- Non-conforming}  \\    
    \midrule
    $L\,\backslash\, p$    &  2 &  3 &  4 &  5 &  6 &  7 &  8    &  2 &  3 &  4 &  5 &  6 &  7 &  8\\
    \midrule
	 3                        & 44 & 42 & 41 & 39 & 37 & 36 & 34    & 40 & 38 & 36 & 35 & 34 & 33 & 31\\
	 4                        & 48 & 47 & 45 & 43 & 43 & 40 & 41    & 44 & 44 & 42 & 42 & 40 & 40 & 39\\
	 5                        & 51 & 49 & 48 & 47 & 45 & 45 & 44    & 49 & 47 & 47 & 46 & 46 & 45 & 44\\
	 6                        & 52 & 51 & 49 & 48 & 47 & 46 & 45    & 58 & 57 & 57 & 56 & 55 & 54 & 53\\
	 7                        & 54 & 53 & 51 & 50 & 49 & 48 & 47    & 74 & 73 & 72 & 71 & 71 & 72 & 70\\
	\bottomrule
	\end{tabular}
\end{center}
\caption{V-cycle with subspace corrected mass smoother for the Yeti footprint}
\label{tab:SCMS3}
\end{table}

The PCG discretizations are presented in Tables~\ref{tab:SCMS2}~(b) and~\ref{tab:SCMS3}~(b);
we again obtain robustness in the grid size and the spline degree.
Here, for the Yeti footprint, we have again iteration counts that are slightly increasing
with the grid size; again, this observation can be explained by the fact that finer
grids allow to resolve the geometry function better, cf.~\eqref{eq:sgn}.

In principle, the method works also if the multigrid solver is applied directly, cf.
Table~\ref{tab:SCMS2}~(a). Here, we suffer from numerical instabilities which are amplified
with an increasing number of levels. One can avoid these instabilities, e.g., by increasing the number
of pre- and post smoothing steps. However, using the multigrid method as a preconditioner within
a PCG solver is obviously the more efficient approach.
 
\section{Multigrid with hybrid smoother}\label{sec:mg:hyb}

We have observed that a multigrid method with the subspace corrected mass smoother
is robust in the grid size and the spline degree and works well for both conforming
and discontinuous Galerkin discretizations. We have also observed that this
approach suffers from non-simple
geometry functions since it is based on the close connection between the stiffness matrix $A_\ell$
and the simplified stiffness matrix $\widehat{A}_\ell$. The results for the Gauss-Seidel smoother are different:
the  multigrid solver works badly both for large spline degrees and for
discontinuous Galerkin discretizations. However, by comparing Table~\ref{tab:GS1} with
Table~\ref{tab:GS2}, we observe that the method behaves quite robust in the geometry function.

Since the behavior of the two smothers is somewhat orthogonal, we can hope for a good method
if we combine them. Our idea is to use one forward Gauss-Seidel sweep followed by the
subspace corrected mass smoother for pre-smoothing and the 
subspace corrected mass smoother followed by one backward Gauss-Seidel sweep
for post-smoothing. The overall method is presented as Algorithm~\ref{alg:2}.

\begin{algorithm}[H]
\textsc{Multigrid}$\left(\ell, \ul{f}_\ell, \ul{u}_\ell\right)$\\
\mbox{}\qquad// Pre-Smoothing (forward Gauss-Seidel)\\
\mbox{}\qquad$\ul{u}_\ell\gets \ul{u}_\ell +  (L_\ell^{GS})^{-1}
                                    \left(\ul{f}_\ell-A_\ell\;\ul{u}_\ell\right)$ \\
\mbox{}\qquad// Pre-Smoothing (subspace corrected mass smoother)\\
\mbox{}\qquad\textbf{for} $n=1,\ldots,\nu_\ell$\\
\mbox{}\qquad\mbox{}\qquad $\ul{u}_\ell\gets \ul{u}_\ell +  (L_\ell^{SCMS})^{-1}
                                    \left(\ul{f}_\ell-A_\ell\;\ul{u}_\ell\right)$ \\
\mbox{}\qquad// Coarse-grid correction\\
\mbox{}\qquad\textbf{if} $\ell=1$\\
\mbox{}\qquad\mbox{}\qquad $\ul{u}_\ell\gets \ul{u}_\ell+ I_{\ell-1}^{\ell} A_{\ell-1}^{-1} I_{\ell}^{\ell-1}\left(\ul{f}_\ell - A_\ell
                      \;\ul{u}_\ell\right)$\quad// Direct solver\\
\mbox{}\qquad\textbf{else}\\
\mbox{}\qquad\mbox{}\qquad\textbf{for} $n=1,\ldots,\mu$\\
\mbox{}\qquad\mbox{}\qquad\mbox{}\qquad $\ul{u}_\ell\gets \ul{u}_\ell + I_{\ell-1}^{\ell}
			\textsc{Multigrid}\left(\ell-1, 
									I_{\ell}^{\ell-1}\left(\ul{f}_\ell - A_\ell
                      \;\ul{u}_\ell\right), 0\right)$
                      \\
\mbox{}\qquad// Post-Smoothing (subspace corrected mass smoother)\\
\mbox{}\qquad\textbf{for} $n=1,\ldots,\nu_\ell$\\
\mbox{}\qquad\mbox{}\qquad $\ul{u}_\ell\gets \ul{u}_\ell +  (L_\ell^{SCMS})^{-1}
                                    \left(\ul{f}_\ell-A_\ell\;\ul{u}_\ell\right)$ \\
\mbox{}\qquad// Post-Smoothing (backward Gauss-Seidel)\\
\mbox{}\qquad$\ul{u}_\ell\gets \ul{u}_\ell +  (L_\ell^{GS})^{-\top}
                                    \left(\ul{f}_\ell-A_\ell\;\ul{u}_\ell\right)$ \\
\mbox{}\qquad\textbf{return} $\ul{u}_\ell$
\caption{\label{alg:2}Multigrid algorithm with hybrid smoother}
\end{algorithm}

The convergence analysis from Section~\ref{sec:mg:scms} can be easily carried
over to the hybrid smoother. The
iteration matrix for the (V or W cycle) multigrid method with the hybrid smoother is given by
\[
	\widetilde{W}_\ell :=
		(I-(L^{GS}_\ell)^{-\top}A_\ell)
		W_\ell
		(I-(L^{GS}_\ell)^{-1}A_\ell),
\]
where $W_\ell$
is the iteration matrix of the (V or W cycle, respectively) multigrid method
with the subspace corrected mass smoother. Since the Gauss-Seidel iteration
is stable in the energy norm,  we obtain
\[\|\widetilde{W}_\ell\|_{A_\ell}\le 
\|I-(L^{GS}_\ell)^{-\top}A_\ell\|_{A_\ell}
\|W_\ell\|_{A_\ell}
\|I-(L^{GS}_\ell)^{-1}A_\ell\|_{A_\ell}
\le \|W_\ell\|_{A_\ell}.\]
So, we have using
the results from the last section the convergence of the 
W-cycle multigrid method with hybrid smoother. Thus, we obtain as follows.
\begin{corollary}
	Consider the multigrid solver with the hybrid smoother.
	Under the assumptions of Theorem~\ref{thrm:converg:conf} or
	\ref{thrm:converg}, respectively, 
	the W-cycle multigrid method converges with
	a convergence rate $q\le \max_\ell \nu_\ell^*/\nu_\ell$.
\end{corollary}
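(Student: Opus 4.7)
The plan is to reduce the convergence analysis of the hybrid scheme to that of the pure subspace corrected mass smoother multigrid method, which is already covered by Theorems~\ref{thrm:converg:conf} and~\ref{thrm:converg}. The main tool will be the fact that a symmetric Gauss-Seidel sweep is non-expansive in the energy norm induced by the stiffness matrix, so that pre- and post-wrapping the existing multigrid iteration by Gauss-Seidel sweeps cannot increase the contraction constant in $\|\cdot\|_{A_\ell}$.

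First, I would write down the one-cycle iteration matrix explicitly. Since the hybrid algorithm applies a forward Gauss-Seidel sweep, then $\nu_\ell$ subspace-corrected-mass steps, then the coarse-grid correction, then $\nu_\ell$ subspace-corrected-mass steps, then a backward Gauss-Seidel sweep, the resulting error propagation matrix factors as
\begin{equation*}
	\widetilde W_\ell = \bigl(I-(L_\ell^{GS})^{-\top}A_\ell\bigr)\, W_\ell \,\bigl(I-(L_\ell^{GS})^{-1}A_\ell\bigr),
\end{equation*}
where $W_\ell$ is the iteration matrix of the corresponding multigrid cycle (V-cycle or W-cycle) that employs only the subspace corrected mass smoother. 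By submultiplicativity of the operator $A_\ell$-norm, the rate of the hybrid method is controlled by the product of the three factor norms.

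The main obstacle is the Gauss-Seidel energy-norm estimate. Writing $M_\ell:=L_\ell^{GS}$ for the lower-triangular part of $A_\ell$ (including the diagonal), one has $M_\ell+M_\ell^\top-A_\ell = D_\ell$, the positive-definite diagonal of $A_\ell$. The standard identity
\begin{equation*}
	A_\ell - \bigl(I-M_\ell^{-\top}A_\ell\bigr)^\top A_\ell \bigl(I-M_\ell^{-1}A_\ell\bigr) = A_\ell M_\ell^{-\top}\bigl(M_\ell+M_\ell^\top-A_\ell\bigr)M_\ell^{-1} A_\ell
\end{equation*}
then forces the right-hand side to be positive semi-definite, whence $\|I-M_\ell^{-1}A_\ell\|_{A_\ell}\le 1$; taking the transpose yields the same bound for the backward sweep. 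This step is classical but worth recording carefully, because it is the only place where the specific structure of $L_\ell^{GS}$ enters.

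Combining these two ingredients gives $\|\widetilde W_\ell\|_{A_\ell}\le \|W_\ell\|_{A_\ell}$, so the W-cycle convergence rate with the hybrid smoother is bounded by the corresponding rate with the plain subspace corrected mass smoother. Under the hypotheses of Theorem~\ref{thrm:converg:conf} (conforming case) or Theorem~\ref{thrm:converg} (SIPG case), the latter is at most $\max_\ell \nu_\ell^*/\nu_\ell$, which is precisely the desired bound. No further information about the hybrid composition is needed, so the proof collapses to a short concatenation of these three observations.
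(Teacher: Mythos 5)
Your proposal is correct and follows essentially the same route as the paper: factor the hybrid iteration matrix as $\widetilde W_\ell = (I-(L_\ell^{GS})^{-\top}A_\ell)\,W_\ell\,(I-(L_\ell^{GS})^{-1}A_\ell)$, use submultiplicativity of the $A_\ell$-norm, and invoke the energy-norm non-expansiveness of the Gauss-Seidel sweep to reduce everything to Theorems~\ref{thrm:converg:conf} and~\ref{thrm:converg}. The only difference is that you spell out the classical identity $A_\ell - E^\top A_\ell E = A_\ell M_\ell^{-\top}(M_\ell+M_\ell^\top-A_\ell)M_\ell^{-1}A_\ell$ justifying that non-expansiveness, which the paper simply asserts as known.
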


\begin{table}[h]
\begin{center}
    \begin{tabular}{l|rrrrrrr|rrrrrrr}
    \toprule
						& \multicolumn{7}{c|}{(a) Direct -- Conforming}  & \multicolumn{7}{c}{(b) PCG -- Conforming}  \\    
    \midrule
    $L\,\backslash\, p$    &  2 &  3 &  4 &  5 &  6 &  7 &  8    &  2 &  3 &  4 &  5 &  6 &  7 &  8\\
    \midrule
	 3                        & 12 & 15 & 22 & 29 & 35 & 40 & 47    &  9 & 11 & 14 & 17 & 20 & 21 & 22\\
	 4                        & 13 & 16 & 23 & 32 & 38 & 45 & 50    & 10 & 11 & 15 & 19 & 21 & 23 & 25\\
	 5                        & 14 & 16 & 24 & 35 & 41 & 47 & 53    & 11 & 11 & 16 & 19 & 22 & 24 & 26\\
	 6                        & 15 & 16 & 26 & 37 & 45 & 52 & 55    & 11 & 12 & 16 & 20 & 23 & 26 & 27\\
	 7                        & 16 & 17 & 26 & 38 & 47 & 54 & 57    & 12 & 12 & 17 & 21 & 24 & 26 & 28\\
	\bottomrule
	\end{tabular}
\end{center}
\caption{V-cycle with hybrid smoothing strategy for the Yeti footprint}
\label{tab:HYB1}
\end{table}

\begin{table}[h]
\begin{center}
    \begin{tabular}{l|rrrrrrr|rrrrrrr}
    \toprule
						& \multicolumn{7}{c|}{(a) Direct -- Non-conforming}  & \multicolumn{7}{c}{(b) PCG -- Non-conforming}  \\    
    \midrule
    $L\,\backslash\, p$    &  2 &  3 &  4 &  5 &  6 &  7 &  8    &  2 &  3 &  4 &  5 &  6 &  7 &  8\\
    \midrule
	 3                        & 23 & 18 & 35 & 34 & 37 & 24 & 28    & 15 & 17 & 18 & 19 & 20 & 20 & 21\\
	 4                        & 19 & 23 & 31 & 42 & 32 & 59 & 32    & 16 & 19 & 21 & 22 & 24 & 26 & 26\\
	 5                        & 20 & 25 & 34 & 50 & 46 & 67 & 62    & 17 & 20 & 22 & 25 & 27 & 29 & 31\\
	 6                        & 22 & 27 & 30 & 47 & 48 & 52 & 69    & 17 & 20 & 23 & 25 & 27 & 30 & 32\\
	 7                        & 22 & 29 & 33 & 40 & 58 & 52 & 65    & 17 & 21 & 23 & 26 & 29 & 31 & 33\\
	\bottomrule
	\end{tabular}
\end{center}
\caption{V-cycle with hybrid smoothing strategy for the Yeti footprint}
\label{tab:HYB2}
\end{table}

The tables for the experiments with the hybrid smoother are obtained with the following
code, where the values $L$ and $p$ are substituted accordingly:
\begin{center}
\begin{minipage}{.95\textwidth}
\begin{lstlisting}[mathescape,columns=flexible,basicstyle=\ttfamily]
> ./multiGrid_example -g domain2d/yeti_mp2.xml -r $L$ -p $p$
      -s hyb --MG.Damping .25 --MG.Scaling .1 -i d       $\tabnr{tab:HYB1}{a}$
> ./multiGrid_example -g domain2d/yeti_mp2.xml -r $L$ -p $p$ --DG
      --NonMatching -s hyb --MG.Damping .25 --MG.Scaling .1
      -i d                                               $\tabnr{tab:HYB2}{a}$
\end{lstlisting}
\end{minipage}
\end{center}
The results for the PCG experiments, presented in Tables~\ref{tab:HYB1}~(b) and~\ref{tab:HYB2}~(b),
are obtained by replacing the option {\tt -i d} by the option {\tt -i cg}.

For both cases, we obtain that the iteration counts are quite robust in the grid
size (even if the maximum number of iterations is not reached for the coarser grid
levels). We observe that the number of iterations
increases with the spline degree. This is indeed due to the fact that for small values
of $p$, the Gauss-Seidel smoother yields very fast convergence and that convergence
behavior is carried over to the hybrid smoother. For larger spline degrees, the
hybrid smoother's convergence behavior degrades mildly; this is due to the fact
that the Gauss-Seidel smoother is not completely capable to capture all effects
perfectly. Still, keeping in mind that the condition number of the stiffness matrix
grows exponentially with the spline degree, the observed behavior is still very
satisfactory.

Compared to applying the subspace corrected mass smoother only, the hybrid smoother
pays of in most cases. Certainly, applying the hybrid smoother with $\nu_\ell:=1$
means basically that $2$ pre- and $2$ post-smoothing steps are applied. Since the Gauss-Seidel 
smoother is slightly cheaper than the subspace corrected mass smoother, the costs for
one such cycle are smaller than the costs of two multigrid cycles with
the subspace corrected mass smoother only.

\begin{figure}[h]
\begin{center}
	\includegraphics[height=.28\textwidth]{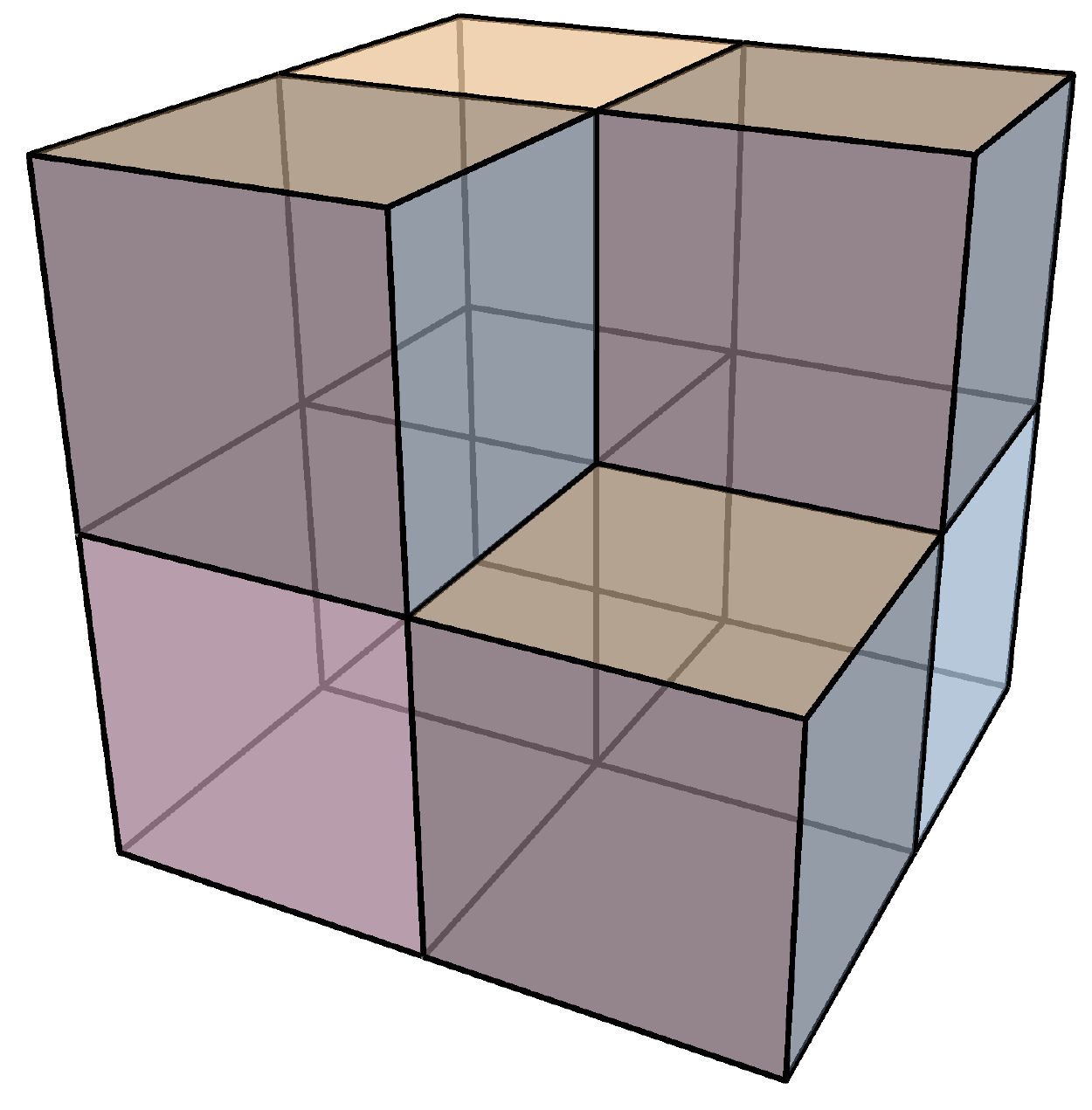} \qquad\qquad
	\includegraphics[height=.28\textwidth]{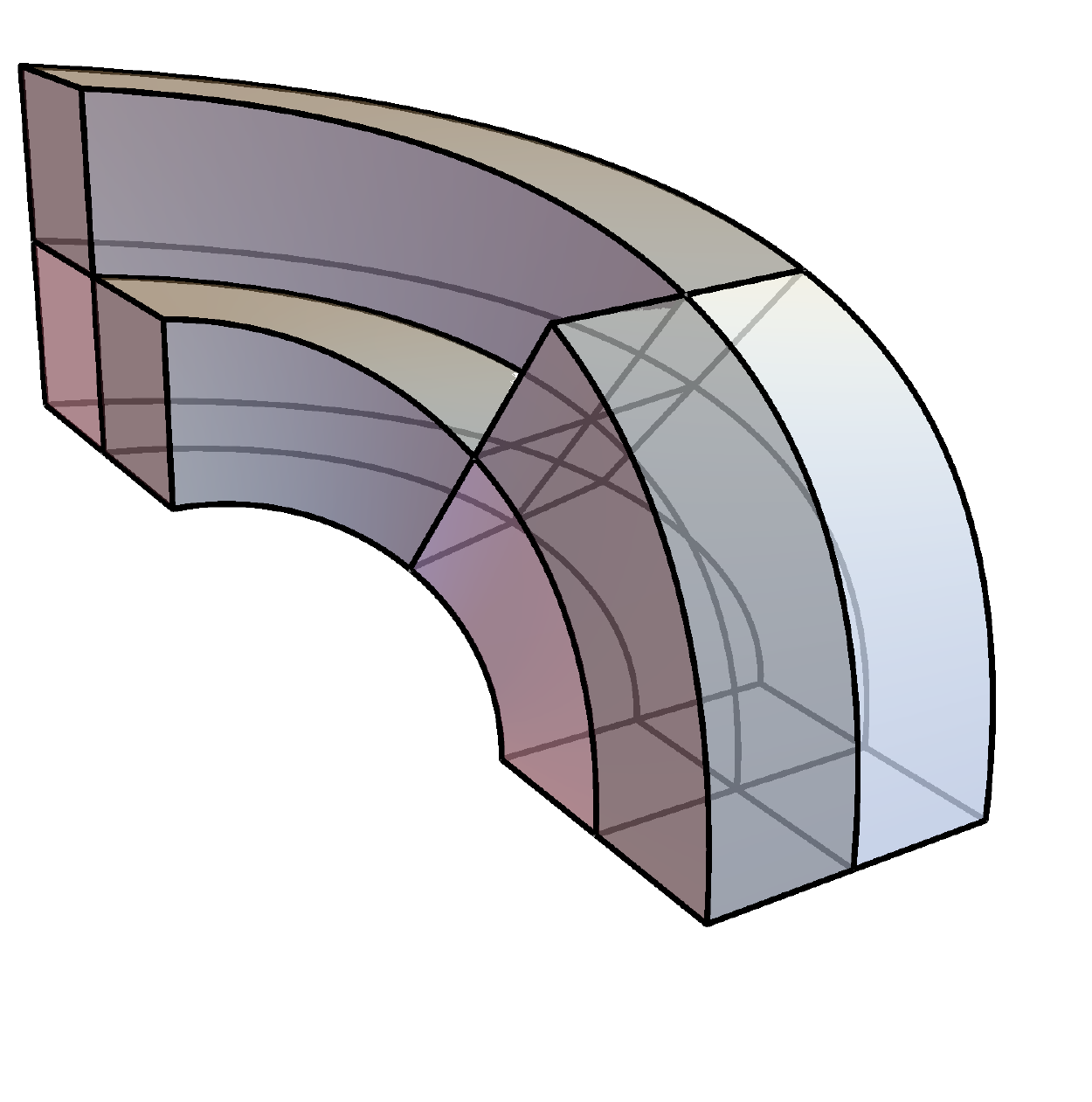}
\end{center}
\caption{The 3D computational domains: Fichera corner and twisted Fichera corner}
\label{fig:2}
\end{figure}

Besides the two-dimensional examples considered so far, the proposed methods can be directly
extended to three dimensional problems (even if the details of the convergence theory have not been worked out for these
cases). We consider the two domains depicted in Figure~\ref{fig:2}: the Fichera corner and
a variant of that domain with non-trivial geometry function, which we call twisted
Fichera corner.

The tables for the three dimensional domains are obtained with the following
code, where the values $L$ and $p$ are substituted accordingly:
\begin{center}
\begin{minipage}{.95\textwidth}
\begin{lstlisting}[mathescape,columns=flexible,basicstyle=\ttfamily]
> ./multiGrid_example -g domain2d/fichera.xml -r $L$ -p $p$
      -s hyb --MG.Scaling .12 --MG.Damping 1 -i cg        $\tabnr{tab:HYB3}{a}$
> ./multiGrid_example -g domain2d/twisted_fichera.xml -r $L$ -p $p$
      -s hyb --MG.Scaling .12 --MG.Damping .25 -i cg     $\tabnr{tab:HYB4}{a}$
\end{lstlisting}
\end{minipage}
\end{center}
The results for the DG experiments, presented in the Tables~\ref{tab:HYB3}~(b) and 
\ref{tab:HYB4}~(b), are obtained by adding the command line options
{\tt --DG --NonMatching}.

\begin{table}[h]
\begin{center}
    \begin{tabular}{l|rrrrr|rrrrr}
    \toprule
						& \multicolumn{5}{c|}{(a) PCG -- Conforming}  & \multicolumn{5}{c}{(b) PCG -- Non-conforming}  \\    
    \midrule
    $L\,\backslash\, p$    &  \quad2 &  \quad3 &  \quad4 &  \quad5 &  \quad6       &  \quad2 &  \quad3 &  \quad4 &  \quad5 &  \quad6 \\
    \midrule
	 2                        &  6 &  6 &  7 &  8 &  8     & 12 & 13 & 15 & 16 & 19 \\
	 3                        &  6 &  7 &  7 &  8 &  8     & 13 & 15 & 16 & 18 & 20 \\
	 4                        &  6 &  8 &  9 &  9 & 10     & 14 & 16 & 18 & 19 & 20 \\
	 5                        &  6 &  8 &  9 & 10 & 11     & 14 & 16 & 17 & 19 & 20 \\
	\bottomrule
	\end{tabular}
\end{center}
\caption{V-cycle with hybrid smoothing strategy for the Fichera corner}
\label{tab:HYB3}
\end{table}

\begin{table}[h]
\begin{center}
    \begin{tabular}{l|rrrrr|rrrrr}
    \toprule
						& \multicolumn{5}{c|}{(a) PCG -- Conforming}  & \multicolumn{5}{c}{(b) PCG -- Non-conforming}  \\    
    \midrule
    $L\,\backslash\, p$    &  \quad2 &  \quad3 &  \quad4 &  \quad5 &  \quad6       &  \quad2 &  \quad3 &  \quad4 &  \quad5 &  \quad6 \\
    \midrule
	 2                        & 10 & 13 & 14 & 15 & 17     & 22 & 25 & 28 & 30 & 31 \\
	 3                        & 13 & 15 & 18 & 20 & 22     & 29 & 31 & 33 & 36 & 38 \\
	 4                        & 14 & 17 & 19 & 22 & 25     & 31 & 34 & 37 & 40 & 42 \\
	 5                        & 16 & 17 & 20 & 23 & 26     & 31 & 36 & 40 & 44 & 47 \\
	\bottomrule
	\end{tabular}
\end{center}
\caption{V-cycle with hybrid smoothing strategy for the twisted Fichera corner}
\label{tab:HYB4}
\end{table}

Similar to the results for the Yeti footprint, Tables~\ref{tab:HYB3} and~\ref{tab:HYB4}
again show small iteration counts. For the twisted Fichera corner, we observe that the
number of iterations increases mildly when the grid gets refined; this is again to
be explained by the better resolution of the geometry function. Moreover, we observe
a very mild dependence on the spline degree.  

\section{Conclusions and outlook}\label{sec:fin}

We have presented robust multigrid solvers for multi-patch IgA
with conforming and non-conforming discretizations. We have given convergence results
that exactly state the robustness of the solvers in the grid size. Concerning the
dependence on the spline degree, the statements seem to be too pessimistic since
the solvers have been completely or (at least) rather robust in practice.

We have addressed another issue, which causes problems for all solvers that use the tensor-product
structure on the parameter domain: the dependence on the geometry function. We have proposed
a hybrid smoother between our subspace corrected mass smoother and the Gauss-Seidel smoother
which seems to reduce the effect on the geometry function. Finding approaches to better incorporate
the geometry function into the smoother itself seems to be an interesting topic for further research.

\section*{Appendix}

In the appendix, we give a proof of Theorem~\ref{thrm:converg} and of some auxiliary results.

Every constant $c$ used within the appendix is assumed to be independent of the
grid size, the grid level, the spline degree and the number of patches, but it may
depend on the geometry function (cf.~\cite[Assumption~3]{Takacs:2019}), the number
of neighbors of a patch (cf.~\cite[Assumption~2.3]{Takacs:2018}), the constant
in the elliptic regularity assumption (cf.~\cite[Assumption~3.1]{Takacs:2018})
and the quasi-uniformity of the grid, i.e., the ratio between the largest and the
smallest knot span of the knot vectors on one level.
We write $A \lesssim B$ if and only if there is a constant $c$ such that $A \le c \; B$ and
we write $A \eqsim B$ if and only if $A\lesssim B$ and $B\lesssim A$.

First, we show the following lemma, which is basically a trace inequality.

\begin{lemma}\label{lem:vertex}
		Let $S:=S^{(1)}\otimes S^{(2)}$ be the space of tensor-product
		splines of degree $p$ on a quasi-uniform grid with size $h$
		on the parameter domain $\widehat{\Omega}:=(0,1)^2$.
		Then, the estimate
		\[
			|u(0)|^2 \lesssim  \left(\log \left( 1 + \frac{p^4}{h^{2}\theta^{2}}\right)\right)^2
						\; \left( |u|_{H^1(\widehat{\Omega})}^2 + \theta^{2} 
								\|u\|_{L_2(\widehat{\Omega})}^2 \right)
		\]
		holds for all $u\in S$ and all $\theta \ge 1$.
\end{lemma}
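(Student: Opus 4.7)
The plan is to combine a Brezis--Gallouet--Wainger type logarithmic Sobolev inequality in 2D with the standard spline inverse estimate, after a rescaling that absorbs the weight $\theta$. The factor $p^4/(h^2\theta^2)$ appearing inside the logarithm is exactly the output of such a combination, which is what makes this strategy natural.

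First, I would rescale to move the $\theta$-weight into a standard $H^1$ norm. Setting $\tilde u(y) := u(y/\theta)$ on the dilated square $\tilde\Omega := (0,\theta)^2$, the chain rule gives
\[
\|\tilde u\|_{L_2(\tilde\Omega)}^2 = \theta^2 \|u\|_{L_2(\widehat\Omega)}^2, \qquad |\tilde u|_{H^1(\tilde\Omega)}^2 = |u|_{H^1(\widehat\Omega)}^2, \qquad |\tilde u|_{H^2(\tilde\Omega)}^2 = \theta^{-2} |u|_{H^2(\widehat\Omega)}^2,
\]
with $\tilde u(0)=u(0)$. Hence the right-hand side $|u|_{H^1(\widehat\Omega)}^2 + \theta^2\|u\|_{L_2(\widehat\Omega)}^2$ is exactly $\|\tilde u\|_{H^1(\tilde\Omega)}^2$, while $\tilde u$ is a tensor-product spline of degree $p$ on the rescaled grid of spacing $h\theta$ on $\tilde\Omega$.

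Next I would apply the 2D Brezis--Gallouet--Wainger inequality (after, if necessary, a bounded extension of $\tilde u$ to $\mathbb{R}^2$), giving
\[
|u(0)|^2 = |\tilde u(0)|^2 \le \|\tilde u\|_{L_\infty(\tilde\Omega)}^2 \lesssim \|\tilde u\|_{H^1(\tilde\Omega)}^2 \Bigl(1 + \log\bigl(1 + \|\tilde u\|_{H^2(\tilde\Omega)}^2 / \|\tilde u\|_{H^1(\tilde\Omega)}^2\bigr)\Bigr).
\]
The spline inverse estimate $|u|_{H^2(\widehat\Omega)} \lesssim p^2 h^{-1} |u|_{H^1(\widehat\Omega)}$ on the quasi-uniform tensor-product grid translates, via the rescaling formulas, into $|\tilde u|_{H^2(\tilde\Omega)} \lesssim p^2 (h\theta)^{-1} |\tilde u|_{H^1(\tilde\Omega)}$. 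Substituting this into the log argument yields $\|\tilde u\|_{H^2}^2/\|\tilde u\|_{H^1}^2 \lesssim 1 + p^4/(h^2\theta^2)$, so that
\[
|u(0)|^2 \lesssim \bigl(1 + \log(1 + p^4/(h^2\theta^2))\bigr)\,\bigl(|u|_{H^1(\widehat\Omega)}^2 + \theta^2\|u\|_{L_2(\widehat\Omega)}^2\bigr).
\]
This is in fact sharper than the stated bound, which then follows immediately in the application regime where $p^4/(h^2\theta^2)$ is bounded below (so that a single $\log$ can be dominated by its square up to a constant).

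The main obstacle is ensuring that the Brezis--Gallouet--Wainger inequality is available with a constant uniform in the rescaling parameter $\theta$; this requires either a scale-invariant formulation or a $\theta$-independent extension of $\tilde u$ from $\tilde\Omega$ to $\mathbb{R}^2$. A secondary technical point is tracking the spline inverse estimate under rescaling, in particular checking that the quasi-uniformity of the grid (with its constant independent of $\theta$) is preserved and that the $p^2/h$ exponent in the inverse inequality for tensor-product splines is indeed the right one under the maximum smoothness assumption used throughout the paper.
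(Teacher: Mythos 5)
Your proposal is correct in substance but follows a genuinely different route from the paper. The paper's proof is spectral: it diagonalizes each univariate factor $S^{(\nu)}$ with respect to the $\theta$-weighted $H^1$ form, groups the eigenvalues into $M\lesssim\log(1+p^4h^{-2}\theta^{-2})$ dyadic bands, bounds the corner value of each band-pair component $w_{m,n}$ by an anisotropic multiplicative trace inequality (which reduces to $|w_{m,n}|_{H^1}^2+\theta^2\|w_{m,n}\|_{L_2}^2$ precisely because the frequencies of $w_{m,n}$ are confined to one band per direction), and then pays a factor $M^2$ through Cauchy--Schwarz over the $M^2$ band pairs --- that is exactly where the squared logarithm comes from. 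Your route --- rescale by $\theta$ so that the weighted right-hand side becomes the plain $\|\tilde u\|_{H^1(\tilde\Omega)}^2$, apply Brezis--Gallouet, and feed in the inverse estimate $|u|_{H^2(\widehat\Omega)}\lesssim p^2h^{-1}|u|_{H^1(\widehat\Omega)}$ (the same inverse estimate the paper uses to bound $\lambda_{\nu,N_\nu}$) --- is shorter, conceptually cleaner, and produces a single logarithm instead of its square, which, if propagated, would sharpen the logarithmic factors in Lemma~\ref{lem:smp1} and Theorem~\ref{thrm:converg}. What the paper's argument buys in exchange is that it stays entirely within the spline/trace toolbox of \cite{Takacs:2018} and never needs an $L_\infty$ estimate or an extension operator on a $\theta$-dependent domain.

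Two points you flagged but left open deserve an explicit word, since neither is fatal. First, the uniformity of the Brezis--Gallouet constant on $(0,\theta)^2$: the cleanest fix is not to extend $\tilde u$ from the dilated square at all, but to restrict it to the fixed unit square $(0,1)^2\subset\tilde\Omega$ touching the corner (legitimate since $\theta\ge1$), apply Brezis--Gallouet there with a fixed extension operator, and then use that $(a,b)\mapsto a^2\bigl(1+\log(1+b/a)\bigr)$ is increasing in both $a$ and $b$ to replace the subsquare norms by the norms over all of $\tilde\Omega$. Second, your bound $1+\log(\cdots)$ does not dominate $(\log(\cdots))^2$ when $p^4h^{-2}\theta^{-2}$ is small; but this is immaterial: in that regime the lemma as literally stated cannot hold with a $\theta$-uniform constant anyway (test $u\equiv1$ and let $\theta\to\infty$), the paper's own proof tacitly assumes $\log(1+p^4h^{-2}\theta^{-2})\gtrsim1$ when it drops the additive $1$ in the definition of $M$, and in the only place the lemma is invoked one has $\theta^2=1+h_\ell^{-2}2^{\ell-L}\le 2h_\ell^{-2}$, hence $p^4h^{-2}\theta^{-2}\ge p^4/2\ge 8$. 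So restricting to that regime, as you do, is exactly right.
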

\begin{proof}
		For $\nu=1,2$, let
		$(\psi_{\nu,i})_{i=1}^{N_\nu}$ be the eigenfunctions of $S^{(\nu)}$, i.e., such that
		\begin{align*}
				 & (\psi_{\nu,i},\psi_{\nu,j})_{L_2(0,1)}  = \delta_{i,j} \quad\mbox{and}\quad \\
				 & (\psi_{\nu,i}',\psi_{\nu,j}')_{L_2(0,1)} 
				 	+ \theta^2 (\psi_{\nu,i},\psi_{\nu,j})_{L_2(0,1)}  = \lambda_{\nu,i} \delta_{i,j},
		\end{align*}
		where $\delta_{i,j}$ is the Kronecker delta and
		$\lambda_{\nu,1} \le \lambda_{\nu,2} \le \cdots \le \lambda_{\nu,N_\nu}$
		are the corresponding
		eigenvalues.  Using coercivity of $(\cdot',\cdot')_{L_2(0,1)}$ and
		a standard inverse estimate, cf. \cite[Corollary~3.94]{Schwab:1998},
		we obtain
		\[
			\theta^2 \le \lambda_{\nu,1} \quad \mbox{and}\quad
			\lambda_{\nu,N_\nu} \lesssim p^4h^{-2}+\theta^2.
		\]
		We define level sets
		\[
			I_{\nu,m} := \{ i \; :\; \mu_{m-1}:=2^{m-1} \theta^2 \le \lambda_{\nu,i}
			< \mu_{m}:=2^m \theta^2 \}
		\]
		for $m\in\{1,2,3,\ldots,M\}$,
		where
		\[
			M:=1+\max_{\nu\in\{1,2\}} \lfloor \log_2(\theta^{-2} \lambda_{\nu,N_\nu})\rfloor
					\lesssim \log ( 1 + p^4h^{-2}\theta^{-2})  
		\]
		is the number of level sets.
		Note that by construction every eigenvalue belongs to exactly one level set.
		Every function $u\in S$ can be represented as
		\begin{align*}
			u(x,y) &= \sum_{i=1}^{N_1} \sum_{j=1}^{N_2} u_{i,j} \psi_{1,i}(x)\psi_{2,j}(y) 
			= \sum_{m=1}^{M} \sum_{n=1}^{M} \underbrace{ \sum_{i\in I_{1,m}}
			\sum_{j\in I_{2,n}} u_{i,j} \psi_{1,i}(x)\psi_{2,j}(y) } _ { \displaystyle w_{m,n}(x,y):= }.
		\end{align*}
		A standard trace estimate, cf.~\cite[Lemma~4.4]{Takacs:2018}, yields
		\begin{equation}\label{eq:trace}
		\begin{aligned}
			|w_{m,n}(0)|^2 & \lesssim \|w_{m,n}\|_{L_2(\{0\}\times(0,1))}\|w_{m,n}\|_{H^1(\{0\}\times(0,1))}\\
			& \lesssim \|w_{m,n}\|_{0,0,1}^{1/2} \|w_{m,n}\|_{1,0,1} ^{1/2}
			\|w_{m,n}\|_{0,1,1} ^{1/2}\|w_{m,n}\|_{1,1,1} ^{1/2}
		\end{aligned}
		\end{equation}
		where
		\[
		\|w\|_{a,b,\eta}^2 
		:= \left\|\frac{\partial^{a+b}}{\partial x^a\partial x^b} w\right\|_{L_2(\widehat{\Omega})}^2
		+ \eta^{2(a+b)} \|w\|_{L_2(\widehat{\Omega})}^2
		\]
		for $a,b\in\mathbb{N}_0$.
		Since $\theta \ge 1$ and since all eigenvalues are
		in $I_{1,m}$ or $I_{2,n}$, respectively, we obtain 
		\[
			\|w\|_{a,b,1}^2 \le
			\|w\|_{a,b,\theta}^2
				\eqsim \mu_m^{a}\mu_n^{b} \|w_{m,n}\|_{L_2(\widehat{\Omega})}^2.
		\]
		Using~\eqref{eq:trace}, we obtain further
		\begin{align*}
			|w_{m,n}(0)|^2 
			& \lesssim
				\mu_m\mu_n\|w_{m,n}\|_{L_2(\widehat{\Omega})}^2 
			 \eqsim \|w_{m,n}\|_{1,0,\theta} \|w_{m,n}\|_{0,1,\theta} \\
			&	\le \left( |w_{m,n}|_{H^1(\widehat{\Omega})}^2 + \theta^2 \|w_{m,n}\|_{L_2(\widehat{\Omega})}^2\right).
		\end{align*}
		Finally, the Cauchy-Schwarz inequality and orthogonality of the basis functions
		(both in $L_2$ and $H^1$) yield
		\begin{align*}
			|u(0)|^2 
				& \lesssim M^2 \sum_{m=1}^M \sum_{n=1}^M |w_{m,n}(0)|^2 \\
				& \lesssim M^2 \sum_{m=1}^M \sum_{n=1}^M \left(|w_{m,n}|_{H^1(\widehat{\Omega})}+
				\theta^2 \|w_{m,n}\|_{L_2(\widehat{\Omega})} \right) ^2 \\
				& = M^2 \left( |u|_{H^1(\widehat{\Omega})}^2 + \theta^2
					\|u\|_{L_2(\widehat{\Omega})}^2 \right),
		\end{align*}
		which finishes the proof.
\qed\end{proof}

Now, we give bounds on the smoother which allow to show the smoothing property.

\begin{lemma}\label{lem:smp1}
	Provided the assumptions of Theorem~\ref{thrm:converg}, the estimate
	\[
		A_\ell \le L_\ell \lesssim 
		p (\log p)^2  (1+L-\ell)^2 2^{L-\ell} \frac{\tau^*}{\tau} \frac{\delta^*}{\delta}
		\widetilde{L}_\ell 
	\]
	holds, where 
	$\widetilde{L}_\ell:=  Q_\ell + (1+2^{\ell-L}h_\ell^{-2})  M_\ell$ and
	$M_\ell$ is the standard mass matrix.
\end{lemma}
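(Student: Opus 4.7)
The plan is to apply the abstract additive Schwarz framework, which identifies
\[
  \langle L_\ell v,v\rangle \;=\; \tau^{-1}\inf\Bigl\{\sum_{T}\langle L_{\ell,T}v_T,v_T\rangle\,:\,\sum_{T}P_{\ell,T}v_T=v\Bigr\}.
\]
For the lower bound $A_\ell\le L_\ell$ I would fix an arbitrary admissible decomposition $v=\sum_T P_{\ell,T}v_T$ and bound $\langle A_\ell v,v\rangle$ termwise: a strengthened-Cauchy--Schwarz / finite-coloring argument, using Assumption~\ref{ass:1} to control the number of pieces meeting a given piece, reduces the quadratic form to $\sum_T\langle A_\ell P_{\ell,T}v_T,P_{\ell,T}v_T\rangle$. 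For each interface piece this is exactly $\langle L_{\ell,T}v_T,v_T\rangle$ by the definition $L_{\ell,T}=P_{\ell,T}^\top A_\ell P_{\ell,T}$; for each patch-interior piece it is dominated by $\langle L_{\ell,T}v_T,v_T\rangle$ by the subspace-corrected mass-smoother analysis of~\cite{Hofreither:Takacs:2017}, provided $\tau\le\tau^*$ and $\delta\le\delta^*$. Taking the infimum over decompositions then yields $A_\ell\le L_\ell$.

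For the upper bound $L_\ell\lesssim C\,\widetilde L_\ell$ the opposite side of the same identity applies: it suffices to \emph{construct} one particular decomposition of $v$ for which $\sum_T\langle L_{\ell,T}v_T,v_T\rangle\lesssim \tau C\,\langle\widetilde L_\ell v,v\rangle$. My construction peels off interface information piece by piece, starting with the vertex values, then the edge (and, in 3D, face) traces, and leaving a patch-interior remainder that vanishes on every interface. On an interface piece the local matrix $L_{\ell,T}=P_{\ell,T}^\top A_\ell P_{\ell,T}$ contains the broken Dirichlet energies of the two adjacent patches together with the over-penalized jump term $\widetilde\Sigma_\ell\|\llbracket\cdot\rrbracket\|_{L_2(I_{k,l})}^2\eqsim 2^{L-\ell}h_\ell^{-1}p^2\|\cdot\|_{L_2(I_{k,l})}^2$; the mass contribution $(1+2^{\ell-L}h_\ell^{-2})M_\ell$ inside $\widetilde L_\ell$ is tailored exactly to absorb the latter through an inverse inequality in tangential direction, and this is the origin of the factor $2^{L-\ell}$ in the final constant.

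The main obstacle will be the vertex pieces, since pointwise evaluation of a two-dimensional tensor-product spline cannot be controlled by the $H^1$-seminorm alone. Lemma~\ref{lem:vertex} is designed precisely for this purpose; I would apply it on each patch with $\theta^2\eqsim 1+2^{\ell-L}h_\ell^{-2}$, which makes the scaled mass term on the right-hand side match the mass part of $\widetilde L_\ell$. Then $p^4/(h_\ell^2\theta^2)\lesssim p^4\,2^{L-\ell}$, so the logarithm is bounded by $(\log p)(1+L-\ell)$ (for $p\ge 2$), and squaring gives the factor $(\log p)^2(1+L-\ell)^2$. The remaining linear factor $p$ is inherited from the subspace-corrected mass-smoother upper estimate of~\cite{Hofreither:Takacs:2017} applied on each patch interior, while $\tau^*/\tau$ and $\delta^*/\delta$ come directly from the damping and scaling parameters in~\eqref{eq:Ldef}.
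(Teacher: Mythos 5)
Your lower bound $A_\ell\le L_\ell$ and your treatment of the vertex pieces follow the paper's proof quite closely: the finite-overlap/triangle-inequality reduction to $\sum_T\langle A_\ell P_{\ell,T}v_T,P_{\ell,T}v_T\rangle$, the interior estimate from~\cite{Hofreither:Takacs:2017}, and Lemma~\ref{lem:vertex} applied with $\theta^2\eqsim 1+2^{\ell-L}h_\ell^{-2}$ are exactly the ingredients used there. (The paper additionally needs the equivalence $A_\ell\eqsim Q_\ell$ from~\cite[Theorem~8]{Takacs:2019}, also in the over-penalized case $\ell<L$; you use it implicitly in both directions and should state it.)

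The genuine gap is your treatment of the penalty term on the \emph{edge} pieces. You propose to absorb $\widetilde\Sigma_\ell\,\|\llbracket v_T\rrbracket\|_{L_2(I_{k,l})}^2\eqsim 2^{L-\ell}p^2h_\ell^{-1}\|\llbracket v_T\rrbracket\|_{L_2(I_{k,l})}^2$ into the mass part of $\widetilde L_\ell$ by an inverse inequality. For an edge piece $v_T$ the relevant basis functions are supported in a strip of width $\eqsim h_\ell$ next to the interface and satisfy $\|v_T\|_{L_2(I_{k,l})}^2\eqsim p\,h_\ell^{-1}\|v_T\|_{L_2(\Omega_k)}^2$; this is the sharp inverse trace bound in the \emph{normal} direction (a tangential inverse inequality cannot relate a trace norm to a volume norm at all). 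The best such a termwise estimate can give is $2^{L-\ell}p^2h_\ell^{-1}\cdot p\,h_\ell^{-1}\|v_T\|_{L_2}^2\eqsim 4^{L-\ell}p^3\cdot 2^{\ell-L}h_\ell^{-2}\|v_T\|_{L_2}^2$, i.e.\ a factor $4^{L-\ell}p^3$ in front of $(1+2^{\ell-L}h_\ell^{-2})M_\ell$ instead of the claimed $2^{L-\ell}p(\log p)^2(1+L-\ell)^2$; the extra $2^{L-\ell}$ would also break the complexity accounting of Remark~\ref{rem:opcosts}. The paper avoids this by never estimating the edge jumps termwise: since $J_\ell$ vanishes on the patch-interior pieces and distinct edges do not interact under $J_\ell$, one has $\sum_{T\in\mathbb{E}}\|P_TP_T^\top\ul{u}_\ell\|_{J_\ell}^2=\bigl\|\ul{u}_\ell-\sum_{T\in\mathbb{V}}P_TP_T^\top\ul{u}_\ell\bigr\|_{J_\ell}^2\le\|\ul{u}_\ell\|_{J_\ell}^2+\sum_{T\in\mathbb{V}}\|P_TP_T^\top\ul{u}_\ell\|_{J_\ell}^2$, so the edge contribution is bounded by the penalty form of the \emph{global} function (already contained in $Q_\ell\le\widetilde L_\ell$ with constant one) plus the vertex contributions, and only the latter require Lemma~\ref{lem:vertex}. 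You need this cancellation, or an equivalent device, to reach the stated constant.
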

\begin{proof}
	The proof of this Lemma requires the notation from~\cite{Takacs:2018}, i.e.,
	we denote the set of all patch-interiors by $\mathbb{K}$, the set of all edges
	by $\mathbb{E}$ and the set of all vertices by $\mathbb{V}$.
	
	Observe that we have
	\begin{equation}\label{eq:equiv}
				A_\ell \eqsim Q_\ell
	\end{equation}
	for all $\ell=0,2,\ldots,L$, where $Q_\ell$ is defined by~\eqref{eq:model:discr2}
	and~\eqref{eq:a:galerkin}. For $\ell = L$, this statement directly
	follows from~\cite[Theorem~8]{Takacs:2019}.
	Since \cite[Theorem~8]{Takacs:2019} also holds in cases of over-penalization, we can
	apply that theorem also to the case $\ell<L$ and obtain~\eqref{eq:equiv} also in that
	case.
	
	As first step, we bound $L_\ell$ from below. (The following arguments are analogous
	to~\cite[Lemma~4.3]{Takacs:2018}.) The triangle inequality yields
	\begin{equation}\label{eq:19a}
		 A_\ell
		 	\lesssim \sum_{T\in \mathbb{K}\cup\mathbb{E}\cup\mathbb{V}} P_{\ell,T} (P_{\ell,T}^\top A_{\ell}P_{\ell,T}^\top) P_{\ell,T}^\top.
	\end{equation}
	For $T\in \mathbb{K}$, \cite[Lemma~8]{Hofreither:Takacs:2017} and~\eqref{eq:equiv}
	yield $L_{\ell,T}\gtrsim P_{\ell,T}^\top Q_{\ell}P_{\ell,T}^\top \gtrsim
	P_{\ell,T}^\top A_{\ell}P_{\ell,T}^\top$. For
	$T\in \mathbb{E}\cup\mathbb{V}$, we have by definition
	$L_{\ell,T} = P_{\ell,T}^\top A_{\ell}P_{\ell,T}^\top$. Thus, we obtain from~\eqref{eq:19a}
	\begin{equation}\nonumber
		 A_\ell
		 	\lesssim
		 	\sum_{T\in \mathbb{K}\cup\mathbb{E}\cup\mathbb{V}} P_{\ell,T} L_{\ell,T} P_{\ell,T}^\top
	\end{equation}
	and for all $\tau\in (0,\tau^*)$ with $\tau^*$ small enough further
	\begin{equation}\nonumber
		 A_\ell
		 	\le
		 	\tau^{-1}
		 	\sum_{T\in \mathbb{K}\cup\mathbb{E}\cup\mathbb{V}} P_{\ell,T} L_{\ell,T} P_{\ell,T}^\top
		 	= L_\ell,
	\end{equation}
	which shows the first part of the desired inequality.

	Now, we bound $L_\ell$ from above.
	We use the decomposition
	\[
			Q_\ell = K_\ell + \frac{\sigma p^2}{h_L} J_\ell,
	\]
	cf.~\eqref{eq:model:discr2}.
	Using~\eqref{eq:equiv}, we obtain
	\[
			L_{\ell,T}
				= P_{\ell,T}^\top A_\ell P_{\ell,T}
				\eqsim P_{\ell,T}^\top Q_\ell P_{\ell,T}
				= \underbrace{
						P_{\ell,T}^\top K_\ell P_{\ell,T}
				  }_{\displaystyle \widetilde K_{\ell,T} := }
				 + \frac{\sigma p^2}{h_L} \underbrace{
				 	     P_{\ell,T}^\top J_\ell P_{\ell,T}
				  }_{\displaystyle \widetilde J_{\ell,T} := }
	\]
	for all $T\in \mathbb{E}\cup \mathbb{V}$
	and, therefore,
	\begin{equation}\label{eq:upper:decomp}
			L_\ell \eqsim \widetilde K_{\ell}
				+ \frac{\sigma p^2}{h_L}
				\widetilde J_{\ell},
	\end{equation}
	where
	\[
		 \widetilde K_{\ell} :=
				\tau^{-1}
					\sum_{T \in \mathbb{K}} P_{\ell,T} L_{\ell,T} P_{\ell,T}^\top
					+ \tau^{-1}\sum_{T \in \mathbb{E}\cup \mathbb{V}} P_{\ell,T} K_{\ell,T} P_{\ell,T}^\top
	\]
	and
	\[
			\widetilde J_{\ell} :=\tau^{-1} \sum_{T \in \mathbb{E}\cup \mathbb{V}} P_{\ell,T} J_{\ell,T} P_{\ell,T}^\top.
	\]
	Completely analogous to \cite[Lemma~4.7]{Takacs:2018}, we obtain 
	\begin{equation}\label{eq:upper1}
		\widetilde K_{\ell}
		 \lesssim  p \frac{\tau^*}{\tau}\frac{\delta^*}{\delta} (K_\ell + h_\ell^{-2} M_\ell)
		\le p \frac{\tau^*}{\tau}\frac{\delta^*}{\delta}  (Q_\ell + h_\ell^{-2} M_\ell)
	    \le p 2^{L-\ell} \frac{\tau^*}{\tau}\frac{\delta^*}{\delta}  \widetilde{L}_\ell.
	\end{equation}
	
	So, it remains to bound $\widetilde J_{\ell}$ from above.
	Since the restriction of $J_\ell$ to any patch-interior vanishes,
	the same arguments as in the proof of~\cite[Lemma~4.7]{Takacs:2018}
	and the triangle inequality yield
	\begin{equation}\nonumber
	\begin{aligned}
		\sum_{T\in \mathbb E} \| P_TP_T^\top \ul{u}_\ell \|_{J_\ell}^2
			&=  \left\| \sum_{T\in \mathbb E}P_TP_T^\top \ul{u}_\ell \right\|_{J_\ell}^2
			=  \left\| \ul{u}_\ell - \sum_{T\in \mathbb V}P_TP_T^\top \ul{u}_\ell \right\|_{J_\ell}^2 \\
			& \le  \| \ul{u}_\ell \|_{J_\ell}^2 + \sum_{T\in \mathbb V} \| P_TP_T^\top \ul{u}_\ell \|_{J_\ell}^2.
	\end{aligned}
	\end{equation}
	and therefore
	\begin{equation}\label{eq:Tsum}
			 \sum_{T \in \mathbb{E}\cup \mathbb{V}} P_{\ell,T} \widetilde J_{\ell,T} P_{\ell,T}^\top
			\lesssim
			J_\ell +
			  \sum_{T \in \mathbb{V}} P_{\ell,T} \widetilde J_{\ell,T} P_{\ell,T}^\top.
	\end{equation}
	Note that $J_\ell$ models jumps and note that these jumps can be bounded from
	above using the triangle inequality with the function values on both sides. Thus,
	we obtain
	\[
		\sum_{T\in \mathbb V}
				\| P_TP_T^\top \ul{u}_\ell \|_{J_\ell}^2
		\lesssim
		\sum_{k=1}^K
		\sum_{T\in \mathbb V}
				( u_{\ell}|_{\Omega_k}|_T )^2
				\| \psi \|_{L_2(0,1)}^2
				,
	\]
	where $u_{\ell}|_{\Omega_k}$ is the restriction of $u_\ell$ to the patch $\Omega_k$
	and $ u_{\ell}|_{\Omega_k}|_T$ is the evaluation of the continuous extension of
	that function to the vertex $T$ at that vertex and
	$\psi(x)=\max\{1-x/h_\ell,0\}^p$ is the corresponding basis function. Using
	$
			\| \psi \|_{L_2(0,1)}^2 \eqsim p^{-1} h_\ell
	$,
	cf.~\cite[Eq.~(4.16)]{Takacs:2018},
	Lemma~\ref{lem:vertex} (with $\theta:=(1+h_\ell^{-2}2^{\ell-L})^{1/2}$),
	and $h_\ell \le 1$,
	we further obtain
	\begin{align*}
		&\sum_{T\in \mathbb V}
				\| P_TP_T^\top \ul{u}_\ell \|_{J_\ell}^2 
		 \lesssim
		 \frac{h_\ell}{p}
		\left(\log\left(1+\frac{p^4}{h_\ell^{2}(1+h_\ell^{-2}2^{\ell-L})^{2}} \right)\right)^2 
		\\ &\qquad\qquad \sum_{k=1}^K
			\left( |u_\ell \circ G_k |_{H^1(\widehat{\Omega})}^2 + (1+ 2^{\ell-L} h_\ell^{-2})
					 \|u_\ell \circ G_k \|_{L_2(\widehat{\Omega})}^2\right) \\
		& \lesssim
		\frac{h_\ell}{p}
		(\log p) (1+L-\ell)^2
		\sum_{k=1}^K
			\left( |u_\ell \circ G_k |_{H^1(\widehat{\Omega})}^2 + (1+ 2^{\ell-L} h_\ell^{-2})
					 \|u_\ell \circ G_k \|_{L_2(\widehat{\Omega})}^2 \right).
	\end{align*}
	Using~\cite[Lemma~6]{Takacs:2019}, we obtain
	\[
		\sum_{T\in \mathbb V}
				\| P_TP_T^\top \ul{u}_\ell \|_{J_\ell}^2
		\lesssim
		\frac{h_\ell}{p}
		(\log p)^2 (1+L-\ell)^2 
			\left(|u_\ell|_{H^1(\Omega)}^2+ (1+ 2^{\ell-L} h_\ell^{-2})  \|u_\ell\|_{L_2(\Omega)}^2\right) 
	\]
	and thus
	\[
			\sum_{T \in \mathbb{V}} P_{\ell,T} \widetilde J_{\ell,T} P_{\ell,T}^\top
			\lesssim \frac{h_\ell}{p} (\log p)^2 (1+L-\ell)^2 
			( K_\ell + (1+ 2^{\ell-L} h_\ell^{-2})   M_\ell ).
	\]
	This shows together with~\eqref{eq:Tsum} and $h_L \eqsim 2^{\ell-L} h_\ell$
	\begin{align*}
		\frac{\sigma p^2}{h_L} \widetilde J_{\ell}
		&=
			\tau^{-1} \frac{\sigma p^2}{h_L}
			\sum_{T \in \mathbb{E} \cup \mathbb{V}} P_{\ell,T} \widetilde J_{\ell,T} P_{\ell,T}^\top\\
		&\lesssim
			\tau^{-1} \left( \frac{\sigma p^2}{h_L}
				J_\ell + 
					  p (\log p)^2 (1+L-\ell)^2 2^{L-\ell} 
						( K_\ell + (1+ 2^{\ell-L} h_\ell^{-2})   M_\ell ) \right)\\
		&\lesssim
			\tau^{-1}
			p (\log p)^2 (1+L-\ell)^2 2^{L-\ell}  \widetilde{L}_\ell.
	\end{align*}
	Since $\delta \in (0,\delta^*)$ and since $\tau^* \eqsim 1$, we obtain
	\begin{align*}
		\frac{\sigma p^2}{h_L} \widetilde J_{\ell}
		&\lesssim
			p (\log p)^2 (1+L-\ell)^2 2^{L-\ell}
			\frac{\tau^*}{\tau}\frac{\delta^*}{\delta}
   			\widetilde{L}_\ell,
	\end{align*}
	which finishes together with~\eqref{eq:upper:decomp}
	and~\eqref{eq:upper1} the proof.
\qed\end{proof}

\begin{lemma}\label{lem:stab}
	Let $ \|v\|_{Q_\ell^+}^2 := |v|_{H^1(\Omega)}^2 + \sigma^{-2} p^{-4} 4^{\ell-L} h_\ell^2 
          |v|_{H^2(\Omega)}^2$.
	The estimate
	\[
		\inf_{v_\ell \in V_\ell^n} \|u-v_\ell\|_{Q_\ell^+}^2 \lesssim |u|_{H^2(\Omega)}^2
	\]
	holds for all $u \in H^2(\Omega)$.
\end{lemma}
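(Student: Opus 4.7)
The plan is to build $v_\ell$ as a patch-wise spline quasi-interpolant of $u$. The key observation is that the prefactor $\sigma^{-2}p^{-4}4^{\ell-L}h_\ell^2$ in front of the $H^2$-term of $\|\cdot\|_{Q_\ell^+}^2$ is uniformly bounded, so only $H^2$-stability of the interpolant is required for the second term, while a standard $O(h_\ell)$ estimate suffices for the $H^1$-term. In particular, the claim is substantially weaker than an optimal convergence estimate, which makes the result accessible by existing patch-local spline approximation theory.

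On each patch $\Omega_k$ I would pull back via $G_k$, writing $\widetilde u_k := u \circ G_k \in H^2(\widehat{\Omega})$, apply a standard tensor-product spline quasi-interpolant $\widetilde v_{k,\ell} \in \bigotimes_{\delta=1}^d S_{p,h_\ell}$ (for example of the Schumaker type used in~\cite{Takacs:Takacs:2015,Sande:Manni:Speleers:2018}), and push the result back to the physical domain by setting $v_\ell|_{\Omega_k} := \widetilde v_{k,\ell}\circ G_k^{-1}$. The classical approximation estimates yield $|\widetilde u_k - \widetilde v_{k,\ell}|_{H^1(\widehat{\Omega})}^2 \lesssim h_\ell^2 |\widetilde u_k|_{H^2(\widehat{\Omega})}^2$ together with $H^2$-stability $|\widetilde v_{k,\ell}|_{H^2(\widehat{\Omega})}^2 \lesssim |\widetilde u_k|_{H^2(\widehat{\Omega})}^2$, from which $|\widetilde u_k - \widetilde v_{k,\ell}|_{H^2(\widehat{\Omega})}^2 \lesssim |\widetilde u_k|_{H^2(\widehat{\Omega})}^2$ follows by the triangle inequality.

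Since $G_k$ is a smooth diffeomorphism (cf.~\cite[Assumption~3]{Takacs:2019}), these estimates translate to $|u - v_\ell|_{H^1(\Omega_k)}^2 \lesssim h_\ell^2 |u|_{H^2(\Omega_k)}^2$ and $|u - v_\ell|_{H^2(\Omega_k)}^2 \lesssim |u|_{H^2(\Omega_k)}^2$ on each patch. Summing over $k$ yields the analogous bounds for the broken semi-norms on $\Omega$. Using that $h_\ell \le h_0 \lesssim 1$, $\sigma \ge \sigma_0$, $p \ge 1$, and $4^{\ell-L} \le 1$, both contributions to $\|u-v_\ell\|_{Q_\ell^+}^2$ collapse to a constant multiple of $|u|_{H^2(\Omega)}^2$, which is the claim.

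The main technical obstacle is arranging the Dirichlet condition $v_\ell|_{\partial\Omega}=0$ so that $v_\ell \in V_\ell^n$. A quasi-interpolant preserving boundary values does the job provided $u|_{\partial\Omega}=0$, which is the situation under which the lemma is invoked in the convergence analysis (where $u$ represents an error function in $V$). For general $u \in H^2(\Omega)$, one would either restrict to $H^2(\Omega)\cap H^1_0(\Omega)$ or combine a suitable boundary-preserving projection near $\partial\Omega$ with an interior quasi-interpolant; either route leaves the displayed estimate intact since only $H^1$-approximation and $H^2$-stability are needed.
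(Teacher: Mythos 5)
Your construction is correct in substance, but it takes a genuinely different route from the paper, which exploits the weakness of the claimed bound much more aggressively: instead of a degree-$p$ quasi-interpolant on the fine grid $h_\ell$, the paper approximates $u$ by the globally continuous, patch-wise \emph{bilinear} interpolant at the patch corners (i.e., $w$ with $w\circ G_k\in S_{1,1}(\widehat{\Omega})$), which lies in $W\subseteq V_\ell^n$ simply because polynomials of coordinate degree one are contained in every tensor-product spline space of degree $p\ge 1$. This yields only $|u-w|_{H^1(\Omega)}^2\lesssim |u|_{H^2(\Omega)}^2$ rather than your sharper $O(h_\ell^2)$ bound, but that is all the lemma needs, and it buys complete triviality of the $p$- and $h$-dependence: the only approximation theory invoked is classical finite element interpolation on the fixed patch decomposition, plus the geometry-transformation estimate of \cite[Lemma~6]{Takacs:2019} (which both proofs need to pass between $\widehat{\Omega}$ and $\Omega_k$). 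Your route, by contrast, hinges on the quasi-interpolant having $H^1$-error and $H^2$-stability constants that are \emph{uniform in $p$}; this is exactly the delicate point of IgA approximation theory, and it holds for the specific ($L_2$/Ritz-type) projectors of \cite{Takacs:Takacs:2015,Sande:Manni:Speleers:2018} but not for a generic Schumaker-type quasi-interpolant, whose constants degrade with $p$ and would contaminate the $p$-explicit tracking that the whole paper is built around. Your observation about the Dirichlet condition is a fair one --- the lemma as stated for all $u\in H^2(\Omega)$ really requires $u\in H^2(\Omega)\cap H^1_0(\Omega)$ (as in its application in Lemma~\ref{lem:approx}), and the paper's corner interpolant inherits $w|_{\partial\Omega}=0$ automatically from the vanishing corner values, which is tidier than grafting a boundary-preserving modification onto a quasi-interpolant.
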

\begin{proof}
	Let
	\[
		W:=\{
				u \in H^1(\Omega) \;:\;
					u\circ G_k \in S_{1,1}(\widehat{\Omega})
					\mbox{ for all } k = 1,\ldots,K
		\},
	\]
	be the set of all globally continuous functions which are linear locally.
	Observe that $W\subseteq V_\ell^n$. Using $u$ and $w$ being continuous, we obtain
	\begin{align*}
		\|u-w\|_{Q_\ell^+}^2 = |u-w|_{H^1(\Omega)}^2 + 
				\frac{h_\ell^2}{\sigma^2 p^4 4^{L-\ell}} |u-w|_{H^2(\Omega)}^2.
	\end{align*}
	For the choice
	$w\in H^1(\Omega)$ with $w|_{\Omega_k} := w_k = \widehat{w}_k\circ G_k^{-1}$,
	where
	\[
			\widehat{w}_k(x,y) :=
				\sum_{i=0}^1 \sum_{j=0}^1
				\widehat{\phi}_i(x)\phi_j(y) \widehat{u}_k(i,j) \quad \mbox{and}\quad
				\widehat{\phi}_0(t):=1-t \quad \mbox{and}\quad \widehat{\phi}_1(t):=t,
	\]
	we further obtain using
	standard approximation error estimates and \cite[Lemma~6]{Takacs:2019}
	\[
		\inf_{v_\ell \in V_\ell^n} \|u-v_\ell\|_{Q_\ell^+}^2
		\le \|u-w\|_{Q_\ell^+}^2 
			\lesssim \left(1+\frac{h_\ell^2}{\sigma^2 p^4 4^{L-\ell}} \right)|u|_{H^2(\Omega)}^2.
	\]
	Using $h_\ell \le 1$, $\sigma \ge 1$, $p\ge 2$, and $L\ge \ell$, we obtain the desired result.
\end{proof}

\begin{lemma}\label{lem:approx}
	Provided the assumptions of Theorem~\ref{thrm:converg}, the estimate
	\[
		\|  (I-I_{\ell-1}^\ell A_{\ell-1}^{-1}I_\ell^{\ell-1} A_\ell )
					A_\ell^{-1} \widetilde{L}_\ell  \|_{\widetilde{L}_\ell}  \lesssim
					(\log p)^2 
	\]
	holds, where $\widetilde{L}_\ell$ is as in Lemma~\ref{lem:smp1}.
\end{lemma}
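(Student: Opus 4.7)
The plan is to adapt the approximation-property argument from~\cite[Lemma~4.11]{Takacs:2018} to the SIPG setting, combining a best-approximation estimate on the coarse space with an Aubin--Nitsche type duality argument. The $(\log p)^2$ factor should be inherited from the SIPG error estimate~\cite[eq.~(15)]{Takacs:2019}, which in turn rests on the logarithmic trace inequality of Lemma~\ref{lem:vertex}.

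First, fix $u$ and set $w := A_\ell^{-1}\widetilde{L}_\ell u$ and $e := (I - I_{\ell-1}^\ell A_{\ell-1}^{-1} I_\ell^{\ell-1} A_\ell) w$. The nesting $V_{\ell-1}^n \subseteq V_\ell^n$ together with the Galerkin identity~\eqref{eq:a:galerkin} implies that the coarse-grid correction operator is the $A_\ell$-orthogonal projector onto $V_{\ell-1}^n$, so that $e \perp_{A_\ell} V_{\ell-1}^n$ and
\[
\|e\|_{A_\ell}^2 = \inf_{v \in V_{\ell-1}^n}\|w - v\|_{A_\ell}^2.
\]
Applying the SIPG best-approximation estimate~\cite[eq.~(15)]{Takacs:2019} on level $\ell-1$ to a globally $H^2$ representative of $w$ (constructed via a partition of unity across patches), together with elliptic regularity for the continuous problem corresponding to $A_\ell w = \widetilde{L}_\ell u$ and the defining identity $(w,w)_{A_\ell} = (\widetilde{L}_\ell u, w)$, should yield a bound of the form $\|e\|_{A_\ell} \lesssim (\log p)^2 \|u\|_{\widetilde{L}_\ell}$.

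Second, I would upgrade this $A_\ell$-bound to the desired $\widetilde{L}_\ell$-bound by writing
\[
\|e\|_{\widetilde{L}_\ell}^2 = |e|_{Q_\ell}^2 + (1 + 2^{\ell-L} h_\ell^{-2})\|e\|_{L_2(\Omega)}^2,
\]
bounding the first summand by $\|e\|_{A_\ell}^2$ via~\eqref{eq:equiv}, and running an Aubin--Nitsche argument for the mass term. Concretely, solve the adjoint problem $-\Delta \phi = e$ in $\Omega$ with homogeneous Dirichlet data; full elliptic regularity gives $|\phi|_{H^2(\Omega)} \lesssim \|e\|_{L_2(\Omega)}$; SIPG consistency (cf.~\cite[Theorem~9]{Takacs:2019}) and the $A_\ell$-orthogonality of $e$ to $V_{\ell-1}^n$ allow one to write $\|e\|_{L_2(\Omega)}^2 = (e, \phi - v)_{A_\ell}$ for any $v \in V_{\ell-1}^n$; choosing $v$ as the SIPG-best approximant of $\phi$ and invoking~\cite[eq.~(15)]{Takacs:2019} once more produces $\|e\|_{L_2(\Omega)} \lesssim (\log p)^2 h_\ell \|e\|_{A_\ell}$. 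The weight $(1 + 2^{\ell-L} h_\ell^{-2})$ is then exactly compensated by the gained $h_\ell^2$, using $h_\ell \eqsim 2^{L-\ell} h_L$.

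The main obstacle is the first step: because $w$ lives in the broken space $V_\ell^n$, it is in general not globally $H^2$, so~\cite[eq.~(15)]{Takacs:2019} cannot be applied to $w$ directly. The remedy is to decompose $w$ into a continuous lift --- constructed by a partition of unity respecting the patch structure and controlled through the vertex trace estimate of Lemma~\ref{lem:vertex} together with~\cite[Lemma~6]{Takacs:2019} --- plus a small SIPG-type correction, while preserving the sharp $(\log p)^2$ dependence and avoiding any polynomial $p$-factors from naive trace or inverse inequalities. Once this decomposition is in place, the duality argument and the final assembly are essentially routine and run in complete analogy with the conforming case treated in~\cite{Takacs:2018}.
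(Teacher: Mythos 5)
Your plan assembles the right ingredients (elliptic regularity, the SIPG error estimates from \cite{Takacs:2019} with their $(\log p)^2$ factor, and a duality step for the mass term), but the first and central step has a genuine gap that your own text flags and does not close. You propose to bound $\|e\|_{A_\ell} = \inf_{v\in V_{\ell-1}^n}\|w-v\|_{A_\ell}$ by applying the SIPG best-approximation estimate to ``a globally $H^2$ representative of $w$'', where $w=A_\ell^{-1}\widetilde L_\ell u$ is a broken discrete function. No such representative with controlled $H^2$ norm is available in general: the jumps of $w$ across interfaces are controlled only through the penalty term, and any conforming lift built by a partition of unity would cost inverse estimates that reintroduce polynomial powers of $p$ and negative powers of $h_\ell$; Lemma~\ref{lem:vertex} controls point values at patch vertices, not an $H^2$-conforming extension. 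This is not a technicality one can defer --- it is the crux of the approximation property.

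The paper's proof avoids the issue entirely, and this is the standard device you are missing: do not approximate $w$ at all. Instead, introduce $f_\ell\in V_\ell^n$ with $(u_\ell,v_\ell)_{A_L}=(f_\ell,v_\ell)_{L_2(\Omega)}$ for all $v_\ell\in V_\ell^n$, and let $u\in H^1_0(\Omega)$ solve the continuous problem with right-hand side $f_\ell$. Full elliptic regularity gives $|u|_{H^2(\Omega)}\lesssim\|f_\ell\|_{L_2(\Omega)}=\|\ul{u}_\ell\|_{A_\ell M_\ell^{-1}A_\ell}$, so the $H^2$ function entering the error estimates is an exact PDE solution rather than a lift of a discrete function. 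Both $u_\ell$ and the coarse-grid solution $u_{\ell-1}$ are then SIPG approximations of this single $u$ on consecutive levels, and \cite[Theorems~12 and~13]{Takacs:2019} together with Lemma~\ref{lem:stab} and the triangle inequality yield $\|u_\ell-u_{\ell-1}\|_{Q_\ell}^2\lesssim(\log p)^2(1+2^{\ell-L}h_\ell^{-2})^{-1}|u|_{H^2(\Omega)}^2$. The passage to the $\widetilde L_\ell$-norm is then a purely algebraic transposition of the resulting matrix inequality combined with the stability of the $A_\ell$-orthogonal projection, which also spares you the adjoint-consistency subtleties lurking in your Aubin--Nitsche step for the broken function $e$. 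Your overall architecture is recognizably the Hackbusch approximation property, but as written the argument does not go through without replacing the lift of $w$ by this auxiliary-problem construction.
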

\begin{proof}
	Let $u_\ell \in V_\ell^n$ be arbitrary but fixed. Let $f_\ell \in V_\ell^n$ be such that
	\[
		(u_\ell,v_\ell)_{A_L}
				= (f_\ell,v_\ell)_{L_2(\Omega)} \quad \mbox{for all} \quad v_\ell \in V_\ell^n.
	\]
	Let $u_{\ell-1} \in V_{\ell-1}$ and $u\in V$ be such that
	\begin{align*}
		(u_{\ell-1},v_{\ell-1})_{A_L}& = (f_\ell,v_{\ell-1})_{L_2(\Omega)} \quad \mbox{for all} \quad v_{\ell-1} \in V_{\ell-1} , \\
		(\nabla u, \nabla v)_{L_2(\Omega)} &= (f_\ell,v)_{L_2(\Omega)} \quad \mbox{for all} \quad v \in V.
	\end{align*}
	Using $f_\ell\in L_2(\Omega)$ and full elliptic
	regularity, cf.~\cite[Assumption~3.1]{Takacs:2018}, we obtain $u\in H^2(\Omega)$ and
	\begin{equation}\label{eq:above}
		|u|_{H^2(\Omega)} \lesssim \|f_\ell\|_{L_2(\Omega)} 
		= \sup_{v_\ell \in V_\ell^n} \frac{(f_\ell,v_\ell)_{L_2(\Omega)}}{\|v_\ell\|_{L_2(\Omega)}}
		= \sup_{v_\ell \in V_\ell^n} \frac{(u_\ell,v_\ell)_{A_L}}{\|v_\ell\|_{L_2(\Omega)}}
				= \| \ul{u}_\ell \|_{A_\ell M_\ell^{-1} A_\ell}.
	\end{equation}
	\cite[Theorems~12 and 13]{Takacs:2019} and Lemma~\ref{lem:stab} yield
	\begin{align*}
			\| u-u_\ell \|_{Q_\ell}^2
				& \lesssim  \min\{1,(\log \sigma_\ell)^2 \sigma_\ell^{1/(2p-1)} h_\ell^2\} |u|_{H^2(\Omega)}^2, \\
			\| u-u_{\ell-1} \|_{Q_\ell}^2
				& \lesssim  \min\{1, (\log \sigma_{\ell-1})^2 \sigma_{\ell-1}^{1/(2p-1)} h_{\ell-1}^2\} |u|_{H^2(\Omega)}^2,
	\end{align*}
	where
	$
				\sigma_\ell = 2^{L-\ell} p^2 \sigma.
	$
	Using the triangle inequality, $\sigma \eqsim 1$, $\log (ab) \lesssim \log a \log b$,
	$h_\ell \eqsim h_{\ell-1}$, $p\ge2$, and we obtain further
	\begin{align*}
		&\| u_\ell-u_{\ell-1} \|_{Q_\ell}^2
			\lesssim
					\min\{1,(\log \sigma_\ell)^2 \sigma_\ell^{1/(2p-1)} h_\ell^2\} |u|_{H^2(\Omega)}^2\\
			&\qquad	\lesssim 
					\min\{1,(\log p)^2  p^{2/(2p-1)}(1+L-\ell)^2 2^{(L-\ell)(1/(2p-1))} h_\ell^2 \}
					|u|_{H^2(\Omega)}^2 \\
			&\qquad\lesssim 
					\min \{1, (\log p)^2 2^{L-\ell}  h_\ell^2 \} |u|_{H^2(\Omega)}^2 
			 \lesssim
			 		(\log p)^2 (1+2^{\ell-L}h_\ell^{-2})^{-1} |u|_{H^2(\Omega)}^2
	\end{align*}
	Using~\eqref{eq:equiv},
	\eqref{eq:above} and the definition of $u_{\ell-1}$, we obtain further
	\[
		\| ( I - I_{\ell-1}^\ell A_{\ell-1}^{-1} I_\ell^{\ell-1} A_\ell ) \ul{u}_\ell \|_{A_\ell}^2
			\lesssim 
				(\log p)^2 (1+2^{\ell-L}h_\ell^{-2})^{-1}
					\| \ul{u}_\ell \|_{A_\ell M_\ell^{-1} A_\ell}^2.
	\]
	This yields
	\[
		\| A_\ell^{1/2}  ( I - I_{\ell-1}^\ell A_{\ell-1}^{-1} I_\ell^{\ell-1} A_\ell ) 
					A_\ell^{-1} M_\ell^{1/2} \|^2
		\lesssim
			(\log p)^2 (1+2^{\ell-L}h_\ell^{-2})^{-1}
	\]
	and thus
	\[
		\| ( I - I_{\ell-1}^\ell A_{\ell-1}^{-1} I_\ell^{\ell-1} A_\ell ) \ul{u}_\ell 
		\|_{(1+2^{\ell-L}h_\ell^{-2})  M_\ell}^2 
		\lesssim (\log p)^2 
		  \|\ul{u}_\ell\|_{A_\ell}\quad\mbox{for all}\quad \ul{u}_\ell
		\in \mathbb{R}^{N_\ell}.
	\]
	Using this estimate and the stability of the $A_\ell$-orthogonal projection, we obtain
	\begin{align*} 
		\| ( I - I_{\ell-1}^\ell A_{\ell-1}^{-1} I_\ell^{\ell-1} A_\ell ) \ul{u}_\ell 
				\|_{\widetilde{L}_\ell}^2 
		\lesssim  (\log p)^{2} \|\ul{u}_\ell\|_{A_\ell}\qquad\mbox{for all}\quad \ul{u}_\ell\in \mathbb{R}^{N_\ell}
	\end{align*}
	and further
	\[
		\| \widetilde{L}_\ell^{1/2}  ( I - P A_{\ell-1}^{-1} P^\top A_\ell ) A_\ell^{-1} \widetilde{L}_\ell^{-1/2} \|
			 \lesssim \log p.
	\]
	Using the identity $\|A^\top A\|\le \|A\|^2$, we finally obtain the desired result.
\qed\end{proof}

Finally, we can show Theorem~\ref{thrm:converg}. Here, we follow the classical approach
as introduced by Hackbusch, cf.~\cite{Hackbusch:1985}.
\begin{proof}[of Theorem~\ref{thrm:converg}]
	Lemma~\ref{lem:smp1} yields $A_\ell\le L_\ell$.
	Using standard arguments, cf.~\cite[Lemma~2]{Hofreither:Takacs:Zulehner:2017}
	or \cite{Hackbusch:1985},	the smoothing property
	\[
			\|L_\ell^{-1} A_\ell (I- L_\ell^{-1} A_\ell)^\nu  \|_{L_\ell} \le \frac{1}{\nu+1}\le \frac{1}{\nu  }
	\]
	follows. Using Lemma~\ref{lem:smp1}, we obtain further
	\[
			\|\widetilde{L}_\ell^{-1} A_\ell (I- L_\ell^{-1} A_\ell)^\nu  \|_{\widetilde{L}_\ell} \lesssim \frac{p (\log p)^2 (1+L-\ell)^2 2^{L-\ell} }{\nu}
			\frac{\tau^*}{\tau}
			\frac{\delta^*}{\delta}
			,			
	\]
	which shows together Lemma~\ref{lem:approx}
	\begin{align*}
		&\|(I-L_\ell^{-1} A_\ell)^\nu  ( I - P A_{\ell-1}^{-1} P^\top A_\ell ) (I-L_\ell^{-1} A_\ell)^\nu  \|_{A_\ell} \\
		&\qquad \le
		\|( I - P A_{\ell-1}^{-1} P^\top A_\ell ) (I-L_\ell^{-1} A_\ell)^\nu  \|_{A_\ell} \\
		&\qquad \le
		\|( I - P A_{\ell-1}^{-1} P^\top A_\ell ) (I-L_\ell^{-1} A_\ell)^\nu  \|_{\widetilde{L}_\ell}\\
		&\qquad \le
		\|( I - P A_{\ell-1}^{-1} P^\top A_\ell )A_\ell^{-1}\widetilde{L}_\ell\|_{\widetilde{L}_\ell}
		\| \widetilde{L}_\ell^{-1}A_\ell (I-L_\ell^{-1} A_\ell)^\nu  \|_{\widetilde{L}_\ell}\\
		&\qquad \lesssim \frac{p (\log p)^4 (1+L-\ell)^2 2^{L-\ell}}{\nu }
			\frac{\tau^*}{\tau}
			\frac{\delta^*}{\delta} .
	\end{align*}
	This statement
	shows convergence of the two-grid method if $\nu$ is large enough.
	Standard arguments, cf.~\cite{Hackbusch:1985}, allow to extend the
	analysis to the W-cycle multigrid method.
\end{proof}

\section*{Acknowledgments}

The author was supported by the Austrian Science Fund (FWF):
grant P31048, and by the bilateral project DNTS-Austria 01/3/2017
(WTZ BG 03/2017), funded by Bulgarian National Science Fund and OeAD (Austria).

 
\bibliographystyle{amsplain}
\bibliography{references}

\end{document}